\newtheorem{theorem}{Theorem}
\newtheorem{lemma}{Lemma}
\newtheorem{remark}{Remark}
\newtheorem{definition}{Definition}
\newcommand{\Na}{{N_{\rm a}}}
\def\BibTeX{{\rm B\kern-.05em{\sc i\kern-.025em b}\kern-.08em
    T\kern-.1667em\lower.7ex\hbox{E}\kern-.125emX}}
\begin{document}

\title{Patterns of Nonlinear Opinion Formation  \\on Networks
\thanks{Supported by NSF grant CMMI-1635056, ONR grant N00014-19-1-2556, ARO grant W911NF-18-1-0325, DGAPA-UNAM PAPIIT grant IN102420, and Conacyt grant A1-S-10610, and %by an 
NSF %Graduate Research Fellowship grant number 
Grad. Res. Fell. DGE-2039656.
}
}

\author{Anastasia~Bizyaeva, Ayanna Matthews,
        Alessio~Franci,
        and~Naomi~Ehrich~Leonard% <-this % stops a space
\thanks{A. Bizyaeva, A. Matthews and N.E. Leonard are with the Department
of Mechanical and Aerospace Engineering, Princeton University, Princeton,
NJ, 08544 USA; e-mail: bizyaeva@princeton.edu, naomi@princeton.edu.}% <-this % stops a space
\thanks{A. Franci is with the National Autonomous University of Mexico, 04510 Mexico City, Mexico. e-mail: afranci@ciencias.unam.mx}}

\maketitle

\begin{abstract}
When communicating agents form opinions about a set of possible options, agreement and disagreement are both possible outcomes. Depending on the context, either can be desirable or undesirable. We show that for nonlinear opinion dynamics on networks, and a variety of network structures, the spectral properties of the underlying adjacency matrix fully characterize the occurrence of either agreement or disagreement. We further show how the corresponding eigenvector centrality, as well as any symmetry in the network, informs the resulting patterns of opinion formation and agent sensitivity to input that triggers opinion cascades.
\end{abstract}

\begin{IEEEkeywords}
Multi-agent systems, decision making, opinion dynamics, consensus, graph theory
\end{IEEEkeywords}

\section{Introduction}

%Multi-agent systems can leverage network communications to accomplish tasks that require opinion formation about options that depend on context. 

Multi-agent systems that perform distributed control tasks in uncertain or dynamic contexts benefit when agents use network communications to form and change opinions about context-dependent options.
For example, network communications can help autonomous multi-robot teams %that navigate space  
make better navigational choices among alternative routes or better allocation choices among alternative tasks.
%and smart networks do better at  dynamically distributing tasks among nodes. 
Mathematical models of opinion dynamics over networks are often used to bring a group to a desired opinion configuration. In a task-allocation context, agreement is not necessarily the only desirable opinion configuration, as sometimes agents are better off  exploring different routes or doing different tasks.

A general model of opinion dynamics for distributed agents on a network was recently introduced in \cite{Bizyaeva2020,Franci2020}. In this multi-agent multi-option model, real-valued opinions evolve in continuous time according to a nonlinear update rule that saturates network exchanges. A key feature of the model is the emergence of consensus  and dissensus as equilibrium opinion configurations, even when agents are homogeneous, receive no input, and communicate over an all-to-all network.
%Consensus is a network state in which all agents have nonzero and near identical opinions about options. Dissensus is a network state in which all agents have nonzero opinions about options, and the group is unopinionated on average.
The emergence of consensus and dissensus depends on a small number of parameters that distinguish the interactions between agents as cooperative or competitive. 
%It follows from the results for the general model in \cite[Theorem IV.3]{Bizyaeva2020} that formation of consensus and dissensus are inherent properties of the opinion formation process (\ref{eq:opinion_dynamics}). These opinion configurations can arise in a network even when a group of agents is homogeneous, unbiased, and communication is all-to-all. In this paper we investigate formation of agreement and disagreement on undirected graphs of homogeneous agents, and examine how the network structure influences the group-level outcome of the opinion formation process. 

In \cite{Bizyaeva2020} the behavior of the general model is examined with particular attention to all-to-all and vertex-transitive cycle network topologies. The work identifies parameter regimes that correspond to consensus and dissensus (a heterogeneous opinion configuration with neutral average opinion).   In the present paper we take first steps to %extend analysis of the model to understand 
analyze opinion formation on other classes of networks. We consider opinion formation for two options and homogeneous agents that communicate over networks 
%graphs 
that include $K$-regular, bipartite, and strongly connected directed graphs. We examine how network structure influences the group %-level 
outcome of the opinion formation process, and we prove that, generically, agreement and disagreement arise on these networks. We show that the parameter regimes associated with consensus and dissensus for complete graphs in \cite{Bizyaeva2020}  correspond precisely to agreement and disagreement regimes for more general networks. 

The engineering literature on distributed opinion dynamics typically associates formation of opinions in continuous time to the spectral properties of the Laplacian matrix of the network graph. In linear consensus protocols, the governing equations take on the structure of the graph Laplacian, and consensus is achieved as %the group's 
opinions converge to its kernel \cite{OlfatiSaber2004}. Analogous distributed Laplacian schemes are considered with antagonistic (signed) interconnections in, e.g., \cite{Altafini2013,Liu2017,shi2019dynamics}. When graphs are structurally balanced, i.e., when the signed graph Laplacian has a zero eigenvalue \cite{Altafini2013}, 
such schemes give rise to clustered disagreement on the network. A nonlinear model of distributed consensus formation with saturated network interactions studied in \cite{AF-VS-NEL:15a,fontan2017multiequilibria,Gray2018,Abara2018,Fontan2018} also relies on Laplacian-like structure of the governing equations, with each agent weighting its opinion state based on its in-degree on the network graph. Linearization of this model about the unopinionated state % zero equilibrium  
yields linear Laplacian consensus dynamics.

%In contrast to this body of work, 
We study opinion dynamics of homogeneous agents with governing equations that do not necessarily have a Laplacian structure. We show how spectral properties of the {\em adjacency matrix} of the underlying graph, rather than those of its Laplacian, play a key role in characterizing the opinion formation process. In \cite{fontan2017multiequilibria} the spectral properties of the %graph 
adjacency matrix are tied to a necessary and sufficient condition for existence of multi-stable consensus equilibria. However, unlike our model, %because the model statement 
the model in \cite{fontan2017multiequilibria} relies on a Laplacian structure, and so the %opinion 
dynamics do not inherit the full symmetry properties of the network graph. %unlike the model in this paper. 
%In the presented work 
For our model, we show that the largest and smallest eigenvalues of the adjacency matrix determine if nonzero opinions form, and the associated eigenspaces select the sign and relative magnitude of resultant opinions on the network. With the addition of dynamics on an attention parameter, the eigenvectors also % of the adjacency matrix 
determine which agents in the group are maximally sensitive to %external 
inputs. We illustrate how this %sensitivity 
can be used to 
trigger opinion cascades. % on a network. 

In Section \ref{sec:model} we present the opinion dynamics model of \cite{Bizyaeva2020} on a network in the case of homogeneous agents and two options.
In Section \ref{sec:agree_disagree} we prove that agreement and disagreement equilibria, of which consensus and dissensus are special cases, arise on networks as %steady-state 
bifurcations from an unopinionated equilibrium.  We prove relationships between the magnitude of an agent's equilibrium opinion and its centrality on the network in Sections \ref{sec:patterns}, and to the symmetry properties of the network graph in Section~\ref{sec:symmetry}. In Section \ref{sec:sensitivity} we introduce attention feedback dynamics and demonstrate how the group's sensitivity to input relates to the adjacency eigenvectors. We conclude in Section \ref{sec:dis} . % and discuss future directions of this work.

\section{Opinion Dynamics Model\label{sec:model}}

We study a nonlinear model of $N_a$ homogeneous agents forming opinions about two options, a specialization of the general heterogeneous multi-agent, multi-option,  model introduced in \cite{Bizyaeva2020,Franci2020}. The {\em opinion of each agent $i$} is captured by a real-valued variable $x_{i} \in \mathds{R}$. %whose sign represents the choice of option: 
When $x_{i} = 0$ agent $i$ has a \textit{neutral opinion}, and when $x_{i} > 0$ ($< 0$) agent $i$ \textit{favors} option A (option B). A greater magnitude $|x_{i}|$ corresponds to a stronger commitment of agent $i$ to one of the options.  We define a threshold $\vartheta > 0$ to formalize what is meant by ``close''; the value of  $\vartheta$ can be chosen for purposes of interpretation. % without loss of generality in our results. 
We call an agent \textit{opinionated} when $|x_{i}| > \vartheta > 0$, and \textit{unopinionated} otherwise.

%Interactions between agents, i.e., 
%, are encoded in
The {\em directed} graph $\bar G = (V, \bar E)$ encodes which agents can communicate with which other agents. The set of vertices $V = \{ 1, \dots, N_{a} \}$ represents the set of $N_{a}$ agents, and  edges $\bar E \subseteq V \times V$ represent interactions between agents. If edge $\bar e_{ik} \in \bar E$, then agent $k$ is a \textit{neighbor} of agent $i$. %In this paper 
%We assume each agent has a self-loop: $ e_{ii} \in E$ for all $i \in V$. 
When inter-agent communication  is bidirectional, $G$ is  {\em undirected}: if $\bar e_{ik} \in \bar E$, then $\bar e_{ki} \in \bar E$.  The graph adjacency matrix $\bar A \in \mathds{R}^{N_{a} \times N_{a}}$ encodes interaction weights with element $\bar a_{ik} \neq 0$ if and only if $\bar e_{ik} \in \bar E$. %We assume  self-weights $a_{ii} > 0$ for all $i \in V$, which means opinions are {\em self-reinforcing}.
%Furthermore, we assume all bidirectional interactions to have equal weight, with $a_{ik} = a_{ki}$ for all $i,k \in V$.
The sign of %the network 
inter-agent weights determines whether %interactions between 
agents are \textit{cooperative} ($\bar a_{ik} > 0$) or \textit{competitive} ($\bar a_{ik} < 0$). In this paper we specialize $\bar a_{ik}  \in \{0,1\}$. 

Let $\mathbf{x} = (x_{1}, \dots, x_{N_{a}}) \in \mathds{R}^{N_{a}}$ be the {\em opinion state of the group}. When $\mathbf{x} = \mathbf{0}$, the group is in the {\em neutral state}. When all agents are unopinionated, the group is in an {\em unopinionated state}.  A pair of opinionated agents $i,k$ \textit{agree} if both  share the same qualitative opinion state, i.e., $\operatorname{sign}(x_{i}) = \operatorname{sign}(x_{k})$. When all agents agree, the group is an {\em agreement state}. The {\em consensus state} is a special type of agreement state in which 
%When all agents in a group agree with 
opinions are close in value, i.e. $|x_{i} - x_{k}| < \vartheta$ for all $i,k \in V$. A pair of opinionated agents \textit{disagrees} if each has a different qualitative opinion state. If at least one pair of agents disagrees, then the group is in a \textit{disagreement state}. The \textit{dissensus} state is a special type of disagreement state in which individual agents may be opinionated but the group is unopinionated on average, i.e., $\left|\sum_{i = 1}^{N_{a}} x_{i}\right| < \vartheta N_a$.

Each agent's opinion is updated in continuous time as a function of three key terms:  a linear damping term, a nonlinear network interaction term that includes self-reinforcement, and an additive input term:
\begin{equation}
\dot{x}_{i} = - d x_{i} +  u_{i} S\left(\alpha x_{i} + \gamma \sum_{\substack{k=1 \\ k \neq i}}^\Na \bar{a}_{ik} x_{k}\right) + b_{i} := h_{i}(\mathbf{x}). \label{eq:opinion_dynamics_hom}
\end{equation}
%\begin{equation}
%    \dot{x}_{i} = - d_{i} x_{i} +  u_{i} S\left(\sum_{k=1}^\Na a_{ik} x_{k}\right) + b_{i}. \label{eq:opinion_dynamics}
%\end{equation}
$S: \mathds{R} \to \mathds{R}$ is an odd saturating function satisfying $S(0) = 0$, $S'(0) = 1$, $\operatorname{sign} \left( S''(z) \right) = - \operatorname{sign}(z)$. The saturation function is applied to the sum of two terms.  The first is a self-reinforcement term with %self-reinforcing 
weight $\alpha \geq 0$.  The second is the sum of opinions of agent $i$'s neighbors with weight $\gamma \in \mathds{R}$. That the network interactions are saturated using $S$ in \eqref{eq:opinion_dynamics_hom} means the opinion dynamics of agent $i$ are proportionally sensitive to changes in these interactions when the corresponding opinion magnitudes are small, but that the influence of these interactions levels off when the corresponding opinion magnitudes become large.
%its neighbors' opinions (and self-reinforcement) when their opinion  magnitudes are small, but that the neighbors'  and self-reinforcing influence levels off when their opinion magnitudes become large. 
In all simulations we let  $S = \tanh$; however the results hold qualitatively for more general sigmoidal functions.  %For a detailed discussion on motivation of this model formulation and its relation to the broader literature we refer the reader to \cite{Bizyaeva2020,Franci2020}.
For the general model, motivation, and relation to the broader literature see \cite{Bizyaeva2020,Franci2020}.

The parameter $d>0$ is the %agents' opinions' 
\textit{damping coefficient}. In the absence of network interactions, agent $i$'s opinion $x_i$ will  converge exponentially to %the opinion state $x_{i} = 
$b_{i}/d$ at a rate determined by $d$. The control parameter $u_{i}>0$ is the \textit{attention} of agent $i$ to network interactions. Attention $u_i$ governs the %strength of 
influence of the saturated network interactions, relative to damping,  on the opinion dynamics of agent $i$. When $u_i$ is above a critical threshold that grows with $d$, the magnitude of $x_i$ %the opinion state 
grows nonlinearly to a value much larger than $b_i/d$. Attention $u_i$ can be fixed, time-varying, or defined to evolve according to  state-dependent (closed-loop) dynamics. The  \textit{input} $b_{i} \in \mathds{R}$ is an external signal or internal bias that stimulates an agent %'s predisposition 
%towards one of the options: %, which can reflect an external signal or a property of the agent. 
 %agent $i$ is predisposed to 
 in favor of option A (option B) when $b_{i} > 0$ ($<0$). For most of %the results in 
 this paper we focus on the zero-input case: $b_i = 0$ for all $i$.  We consider  nonzero input in Section \ref{sec:sensitivity}.

Let $W(\lambda_i)$ be the generalized eigenspace of $\bar A$ relative to its eigenvalue~$\lambda_i$. Let $\lambda_{max}$ %\geq 0$ 
and $\lambda_{min}$ be the eigenvalues of $\bar{A}$ with largest and smallest real parts. In the following lemma we state several useful properties of $\lambda_{min}$, $\lambda_{max}$, and their associated eigenspaces.
%Define the eigenspaces of $\bar{A}$ corresponding to its eignevalues $\lambda_{i}$ as $W(\lambda_{i}) = \{ \mathbf{x} \in \mathds{R}, \ \ \bar{A}\mathbf{x} = \lambda_{i} x \}$.

\begin{lemma}

\textbf{A.} When $\bar{G}$ is strongly connected (directed), $\lambda_{max} > 0$ is real with multiplicity 1, and for any nonzero vector $\mathbf{v} = (v_{1}, \dots, v_{N_{a}}) \in W(\lambda_{max})$, $v_{i} \neq 0$ and  $\operatorname{sign}(v_{i}) = \operatorname{sign}(v_{k})$ for all $i,k \in V$;

\textbf{B.} When $\bar{G}$ is connected (undirected),  $\lambda_{min} < 0$ is real and for any nonzero vector $\mathbf{w} = (w_{1}, \dots, w_{N_{a}}) \in W(\lambda_{min})$, $\operatorname{sign}(v_{i}) = - \operatorname{sign}(v_{k})$ for at least one pair of $i,k \in V$. \label{lem:lambdas}
\end{lemma}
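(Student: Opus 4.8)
The plan is to obtain \textbf{A} directly from the Perron--Frobenius theorem and \textbf{B} from elementary spectral properties of symmetric matrices together with \textbf{A}.

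For \textbf{A}, I would first observe that $\bar A$ is a nonnegative matrix (entries in $\{0,1\}$) and that, $\bar G$ being strongly connected, $\bar A$ is irreducible --- here I assume $N_a \ge 2$ so $\bar A \neq 0$. The Perron--Frobenius theorem for irreducible nonnegative matrices then gives everything at once: the spectral radius $\rho(\bar A)$ is a strictly positive eigenvalue, it is algebraically simple, it possesses an eigenvector with all entries strictly positive, and every other eigenvalue $\mu$ satisfies $|\mu| \le \rho(\bar A)$ and hence $\operatorname{Re}\mu \le \rho(\bar A)$. Consequently $\lambda_{max} = \rho(\bar A) > 0$ with multiplicity $1$, so $W(\lambda_{max})$ is the one-dimensional span of the strictly positive Perron eigenvector, and any nonzero $\mathbf v \in W(\lambda_{max})$ is a nonzero scalar multiple of it --- all entries nonzero, all of one sign.

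For \textbf{B}, I would use that an undirected $\bar G$ makes $\bar A$ symmetric, so its spectrum is real and $\bar A$ is diagonalizable; in particular $\lambda_{min} \in \R$ and $W(\lambda_{min})$ is the ordinary $\lambda_{min}$-eigenspace. Since $\bar A$ has zero diagonal, $\operatorname{tr}\bar A = \sum_i \lambda_i = 0$, and as $\bar A \neq 0$ not all eigenvalues are zero, forcing $\lambda_{min} < 0$. For the sign claim, note that a connected undirected graph is strongly connected, so \textbf{A} supplies a strictly positive eigenvector $\mathbf v$ for $\lambda_{max}>0>\lambda_{min}$. If some nonzero $\mathbf w \in W(\lambda_{min})$ had all entries of one sign, say $w_i \ge 0$ for all $i$ (the case $w_i \le 0$ is symmetric), then $\mathbf v^\top\mathbf w = \sum_i v_i w_i > 0$ since $v_i>0$ and $\mathbf w \neq 0$; but eigenvectors of a symmetric matrix associated with the distinct eigenvalues $\lambda_{min} \neq \lambda_{max}$ are orthogonal, so $\mathbf v^\top\mathbf w = 0$ --- a contradiction. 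Hence $\mathbf w$ changes sign, giving a pair $i,k$ with $\operatorname{sign}(w_i) = -\operatorname{sign}(w_k)$.

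The main thing to be careful about is invoking Perron--Frobenius in the form valid for \emph{irreducible} (not merely primitive) nonnegative matrices: this is what guarantees that the Perron root is algebraically simple and has a strictly positive eigenvector --- exactly what makes $W(\lambda_{max})$ one-dimensional and sign-definite --- while still allowing other eigenvalues on the circle $|\mu| = \rho(\bar A)$, which is harmless because only $\operatorname{Re}\mu$ matters for $\lambda_{max}$. I would also record the standard equivalence ``$\bar G$ strongly connected $\iff$ $\bar A$ irreducible'' and dispense with the degenerate case $N_a = 1$ (equivalently $\bar A = 0$), where the sign statements about $\lambda_{max}$ and $\lambda_{min}$ do not apply. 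Otherwise the argument is routine bookkeeping with standard spectral graph theory.
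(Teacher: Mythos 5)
Your proof is correct, and it rests on the same engine as the paper's: the paper simply observes that $\operatorname{tr}\bar A=0$ (giving a negative eigenvalue, real by symmetry in the undirected case) and that $\bar A$ is an irreducible nonnegative (Metzler) matrix, then cites a Perron--Frobenius-type theorem for both the simplicity/positivity statement in A and the mixed-sign statement in B. Where you differ is only in part B's sign claim: instead of citing the theorem, you derive it directly from orthogonality of the $\lambda_{min}$-eigenspace to the strictly positive Perron vector, which makes the argument self-contained and also makes explicit why the conclusion holds for \emph{every} nonzero vector of $W(\lambda_{min})$ even when $\lambda_{min}$ is not simple (diagonalizability of the symmetric $\bar A$ identifies the generalized eigenspace with the eigenspace, a point the paper leaves implicit). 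Your handling of the degenerate case $N_a=1$ and of irreducibility versus primitivity is careful bookkeeping that the paper omits but does not change the substance.
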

\begin{proof}Observe that ${\rm tr\,}\bar{A}=0$  so  ${\rm Re\,}(\lambda_{min})<0$. When $\bar{G}$ is undirected, $\bar{A}$ is symmetric and $\lambda_{min}$ is real.
Further, $\bar{A}$ is a nonnegative matrix. Parts A and B follow from the Perron-Frobenius Theorem \cite[Theorem 11]{farina2011positive}. %To prove part B, we observe that ${\rm tr\,}\bar{A}=0$ and so  ${\rm Re\,}(\lambda_{min})<0$. For undirected graphs, $\bar{A}$ is symmetric which means $\lambda_{min}$ is real and the spaces $W(\lambda_{i})$ are orthogonal. For any $\mathbf{w} \in W(\lambda_{min})$ and $\mathbf{v} \in W(\lambda_{max})$, $\sum_{i = 1}^{N_{a}} v_{i} w_{i} = 0$ from which the last conclusion in part B follows. 
\end{proof}

%%%%%%%%%%%%%%%%%%%%%%%%%%%%%%%%%%%%%%%%%%%%%%%%%%%%%%%%%%%%%%%%%%%%%%%%%%%%%%%%%%%%%%%%%%%%%%%%%%%%%%%%%%%%%%%%%%%%%

\section{Agreement and Disagreement\label{sec:agree_disagree}}

In this section we study opinion dynamics (\ref{eq:opinion_dynamics_hom}) with  static $u_{i} := u \geq 0$ for all $i\in V$ and show how cooperative agents ($\gamma > 0$) give rise to agreement, whereas competitive agents ($\gamma < 0$) give rise to disagreement. In the following theorem, we expand upon the result in \cite[Theorem IV.1]{Bizyaeva2020} for two-option networks, and describe the steady-state solutions that arise from  \eqref{eq:opinion_dynamics_hom} in different parameter regimes.
\begin{theorem}
The following hold true for \eqref{eq:opinion_dynamics_hom} with $u_{i} := u \geq 0$ and $b_{i} = 0$ for all $i = 1, \dots, N_{a}$: \\
\textbf{A. Cooperation leads to agreement:} Let $\bar{G}$ be a strongly connected directed graph. If $\gamma > 0 $, the neutral state $\mathbf{x} = 0$ is a locally exponentially stable equilibrium for $0 < u < u_{a}$ and unstable for $u >u_{a}$, with
    \begin{equation}
        u_{a} = \frac{d}{\alpha + \gamma \lambda_{max}}. \label{eq:u_agree}
    \end{equation}
    At $u = u_{a}$, branches of agreement equilibria, $x_{i} \neq 0$, $\operatorname{sign}(x_{i}) = \operatorname{sign}(x_{k})$ for all $i,k \in V$, emerge in a steady-state bifurcation off  of $\mathbf{x} = \mathbf{0}$
    %the neutral equilibrium 
    along $W(\lambda_{max})$; \\
\textbf{B. Competition leads to disagreement:} Let $\bar{G}$ be a connected undirected graph. If $ \gamma < 0 $ the neutral state $\mathbf{x} = \mathbf{0}$ is a locally exponentially stable equilibrium for $0 < u < u_{d}$ and unstable for $u >u_{d}$, with
    \begin{equation}
        u_{d} = \frac{d}{\alpha +\gamma \lambda_{min}} . \label{eq:u_disagree}
    \end{equation}
    At $u = u_{d}$, branches of disagreement equilibria, $\operatorname{sign}(x_{i}) = - \operatorname{sign}(x_{k})$ for at least one pair $i,k \in V $, $i \neq k$, emerge in a steady-state bifurcation off  of $\mathbf{x} = \mathbf{0}$ %the neutral equilibrium 
    along $W(\lambda_{min})$.
\label{thm:agree_disagree}
\end{theorem}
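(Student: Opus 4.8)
The plan is to treat this as a steady-state bifurcation of \eqref{eq:opinion_dynamics_hom} from the neutral equilibrium as $u$ increases, combining a linear stability computation with a symmetry-reduced normal form. I would first set $M := \alpha I + \gamma\bar{A}$, so that the argument of $S$ for agent $i$ is $(M\mathbf{x})_i$ and, since $S'(0)=1$, the Jacobian at $\mathbf{x}=\mathbf{0}$ of the vector field $\mathbf h$ whose $i$-th component $h_i$ is given in \eqref{eq:opinion_dynamics_hom} is $J(u) = -dI + uM$. Because $\bar{A}\mathbf{v}=\lambda\mathbf{v}$ gives $M\mathbf{v}=(\alpha+\gamma\lambda)\mathbf{v}$, the eigenvalues of $J(u)$ are $\mu_j(u) = -d + u(\alpha+\gamma\lambda_j)$ over the eigenvalues $\lambda_j$ of $\bar{A}$. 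For Part A ($\gamma>0$), $\mathrm{Re}\,\mu_j$ is maximized at $\lambda_j=\lambda_{max}$, which is real, positive and simple by Lemma \ref{lem:lambdas}A, so $\alpha+\gamma\lambda_{max}>0$, the rightmost eigenvalue is $\mu_{max}(u)$, and $\mathrm{Re}\,\mu_j(u)<0$ for all $j$ precisely when $u<u_a$ with $u_a$ as in \eqref{eq:u_agree}, while $\mu_{max}(u)>0$ for $u>u_a$. For Part B ($\gamma<0$, $\bar{A}$ symmetric) the rightmost eigenvalue is $\mu_{min}(u)=-d+u(\alpha+\gamma\lambda_{min})$ with $\lambda_{min}<0$ real by Lemma \ref{lem:lambdas}B, so $\alpha+\gamma\lambda_{min}>0$ and the same dichotomy holds about $u_d$ of \eqref{eq:u_disagree}. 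This gives the asserted exponential stability of $\mathbf{x}=\mathbf{0}$ below the threshold and instability above it.

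Next I would analyze the bifurcation at the critical value. At $u=u_a$ (resp.\ $u=u_d$) the matrix $J$ has a zero eigenvalue with eigenspace $W(\lambda_{max})$ (resp.\ $W(\lambda_{min})$) and all other eigenvalues strictly in the open left half-plane, so a local center manifold exists, tangent to that eigenspace. Transversality holds because $\tfrac{d}{du}\mu_{max}=\alpha+\gamma\lambda_{max}>0$ (resp.\ $\tfrac{d}{du}\mu_{min}=\alpha+\gamma\lambda_{min}>0$), so the critical eigenvalue crosses zero with nonzero speed. Since $S$ is odd and $b_i=0$, $\mathbf h(-\mathbf{x})=-\mathbf h(\mathbf{x})$, so the reduced equation on the center manifold is odd: its quadratic terms vanish, the center manifold is flat to second order, and bifurcating equilibria occur in $\pm$ pairs tangent to the critical eigenspace. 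When that eigenspace is one-dimensional (always true for $\lambda_{max}$ by Lemma \ref{lem:lambdas}A), a Lyapunov--Schmidt reduction yields a scalar pitchfork $\dot c = (\alpha+\gamma\lambda_{\bullet})(u-u_{\bullet})\,c+\kappa c^3+O(c^5)$ with $\kappa$ proportional to $S'''(0)\sum_i\ell_i v_i^3$, where $\ell,\mathbf v$ are left and right critical eigenvectors; this is nonzero because $\sum_i\ell_i v_i^3>0$ ($\ell,\mathbf v$ entrywise positive for $\lambda_{max}$) and $S'''(0)<0$ (e.g.\ $-2$ for $\tanh$; if $S'''(0)=0$ the hypothesis $\operatorname{sign}S''(z)=-\operatorname{sign}(z)$ forces the first nonvanishing odd Taylor coefficient of $S$ to be negative, and one argues with that term instead). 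Hence two branches of equilibria bifurcate off $\mathbf{x}=\mathbf{0}$ along the critical eigenspace.

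Finally, to obtain the group-level outcome I would observe that along these branches $\mathbf{x}=c\,\mathbf v+O(c^2)$ with $\mathbf v$ in the critical eigenspace and $c\to 0$ at the bifurcation, so the sign pattern of $\mathbf{x}$ for small $|c|$ is that of $c\,\mathbf v$. For Part A, Lemma \ref{lem:lambdas}A gives $v_i\neq 0$ and $\operatorname{sign}(v_i)=\operatorname{sign}(v_k)$ for all $i,k$, hence every $x_i\neq 0$ with a common sign: an agreement equilibrium. For Part B, Lemma \ref{lem:lambdas}B gives a pair $i,k$ with $\operatorname{sign}(w_i)=-\operatorname{sign}(w_k)$ for any nonzero $\mathbf w\in W(\lambda_{min})$ along which a branch emerges, hence $\operatorname{sign}(x_i)=-\operatorname{sign}(x_k)$ for that pair when $|c|$ is small: a disagreement equilibrium.

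I expect the delicate part to be the bifurcation analysis rather than the spectrum: one must justify the center-manifold/Lyapunov--Schmidt reduction and the nondegeneracy of the normal form, and, in Part B, treat the case $\dim W(\lambda_{min})>1$, where the reduced dynamics form a (generally multi-dimensional) odd vector field on $W(\lambda_{min})$ and the existence and structure of the bifurcating branches require equivariant bifurcation arguments exploiting the reflection $\mathbf{x}\mapsto-\mathbf{x}$ together with any automorphisms of $\bar{G}$ (as developed, for structured graphs, in \cite{Bizyaeva2020} and in later sections). The sign-pattern conclusions above, however, hold for every bifurcating branch regardless of multiplicity, since they use only Lemma \ref{lem:lambdas}.
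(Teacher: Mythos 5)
Your proposal follows essentially the same route as the paper's proof: linearize \eqref{eq:opinion_dynamics_hom} at $\mathbf{x}=\mathbf{0}$ to get $J(\mathbf{0})=(u\alpha-d)\mathcal{I}+u\gamma\bar{A}$ with eigenvalues $\mu_j=u(\alpha+\gamma\lambda_j)-d$, identify the critical attention value from $\lambda_{max}$ (for $\gamma>0$) or $\lambda_{min}$ (for $\gamma<0$), conclude a steady-state bifurcation along $W(\lambda_{max})$ or $W(\lambda_{min})$, and obtain the sign patterns from Lemma~\ref{lem:lambdas}. The only difference is that you additionally justify the existence and local structure of the bifurcating branches (transversality in $u$, oddness of the vector field, cubic nondegeneracy via Lyapunov--Schmidt, and the caveat about multidimensional $W(\lambda_{min})$), details the paper simply asserts and defers to a remark and future work.
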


\begin{proof}
The Jacobian of \eqref{eq:opinion_dynamics_hom} evaluated at %the neutral equilibrium 
$\mathbf{x} = \mathbf{0}$ is $J(\mathbf{0}) = (u \alpha - d) \mathcal{I} + u \gamma \bar{A}$,
where $\mathcal{I}$ %\in \mathds{R}^{\Na \times \Na}$ 
is the identity matrix. The eigenvalues of %the Jacobian 
$J(\mathbf{0})$ are $ \mu_{i} = u (\alpha + \gamma \lambda_{i}) - d $ where $\lambda_{i}$ is an eigenvalue of $\bar{A}$. When $0 \leq u < \min_{i} \frac{d}{\alpha + \gamma {\rm Re\,}(\lambda_{i})}$, ${\rm Re\,}(\mu_{i})$ is negative for all $i \in V$ and $\mathbf{x} = \mathbf{0}$ %the neutral equilibrium 
is locally exponentially stable. For values of $u$ above this bound the origin is unstable. When $\gamma > 0$ this bound  $u_a$ corresponds to $\lambda_{i} = \lambda_{max}$ and %the critical value of $u$ 
is given by (\ref{eq:u_agree}). When $\gamma < 0$ the bound $u_d$ corresponds to $\lambda_{i} = \lambda_{min}$ and %the critical value of $u$ 
is given by (\ref{eq:u_disagree}). Thus, when $\gamma>0$ ($\gamma<0$) a steady-state bifurcation happens at $u=u_a$ ($u=u_d$) along $W(\lambda_{max})$ ($W(\lambda_{min})$). The rest follows from Lemma \ref{lem:lambdas}.
%Let $\mu_{a}$ and $\mu_{d}$ denote the eigenvalues of $J(\mathbf{0})$ corresponding to $\lambda_{max}$ and $\lambda_{min}$ respectively. When $\mu_{a} = 0$ or $\mu_{d} = 0$, a steady-state bifurcation occurs and new branches of solutions appear along $\ker J(\mathbf{0})$. Observe that the eigenspaces of $J(\mathbf{0})$ corresponding to $\mu_{i}$ are precisely the eigenspaces of $\bar{A}$ corresponding to $\lambda_{i}$. The rest follows from Lemma \ref{lem:lambdas}.
\end{proof}
%By Perron-Frobenius theory, the eigenvector corresponding to the largest eigenvalue of a connected, undirected graph is unique and has all positive, nonzero entries \cite[Proposition 3.1.1]{Brouwer2011}. 
%Since $\bar{A}$ has all non-negative entries, it is a Metzler matrix and $\lambda_{max}$ is real-valued with multiplicity 1 and a corresponding eigenvector with all positive entries [CITE METZLER REF]. Recall that for an undirected graph the adjacency matrix is symmetric, and its eigenspaces are orthogonal. Let $\mathbf{v} = (v_{1}, \dots, v_{N_{a}})$ be the eigenvector of $\lambda_{max}$. By orthogonality, for any $\mathbf{w} = (w_{1}, \dots, w_{N_{a}}) \in  W(\lambda_{min})$  $\sum_{i = 1}^{N_{a}} v_{i} w_{i} = 0$; therefore for at least one pair of nodes $i,j \in V$,  $w_{i} > 0$ and $w_{j} < 0$.

%\begin{remark}
%One of the main takeaways of  Theorem \ref{thm:agree_disagree} is that the spectral properties of $\bar{A}$ inform the opinion formation outcomes on the network. This is true precisely because the linear coefficient $d_{i}$ in \eqref{eq:opinion_dynamics} is specialized to be homogeneous, which allows the two matrices in the sum of which makes up the Jacobian to always commute, meaning the eigenvalues of the Jacobian have a linear relationship to $\lambda_{i}$ and the eigenvectors of $\bar{A}$ are also inherited. In network dynamics with Laplacian structure the self-weight $d_{i}$ is the in-degree of node $i$ on the network, and a similar analysis would not hold.
%\end{remark}

\begin{remark}
Due to space constraints, we defer to a future publication detailed analysis of stability of the agreement and disagreement equilibria emerging at agreement and disagreement opinion-forming bifurcations. For one-dimensional kernels, %in particular, 
we expect these equilibria to be generically stable, as easily verifiable using center-manifold reduction arguments. %We will include this analysis in an extended version of this work.
\end{remark}
 
A main takeaway of  Theorem \ref{thm:agree_disagree} is that the spectral properties of $\bar{A}$ inform the opinion formation outcomes on the network. Characterizing the eigenvalues $\lambda_{min}$, $\lambda_{max}$ along with their associated eigenspaces $W(\lambda_{min})$, $W(\lambda_{max})$ is equivalent to characterizing the primary branches of opinionated steady-state solutions of~\eqref{eq:opinion_dynamics_hom} emerging at bifurcations from the neutral state. We use this approach to systematically classify the opinion patterns that arise for various networks with spectral properties that are well known or easily established. For larger and less structured networks these quantities can be easily computed numerically. Figures \ref{fig:GraphClasses} and \ref{fig:MoreGraphs} illustrate agreement and disagreement equilibria of \eqref{eq:opinion_dynamics_hom} on a variety of graphs, with %the value of attention 
$u$ %selected to be 
slightly above the %appropriate 
bifurcation point. 

\begin{figure}%[h]
    \centering
    \includegraphics[width = 0.47\textwidth]{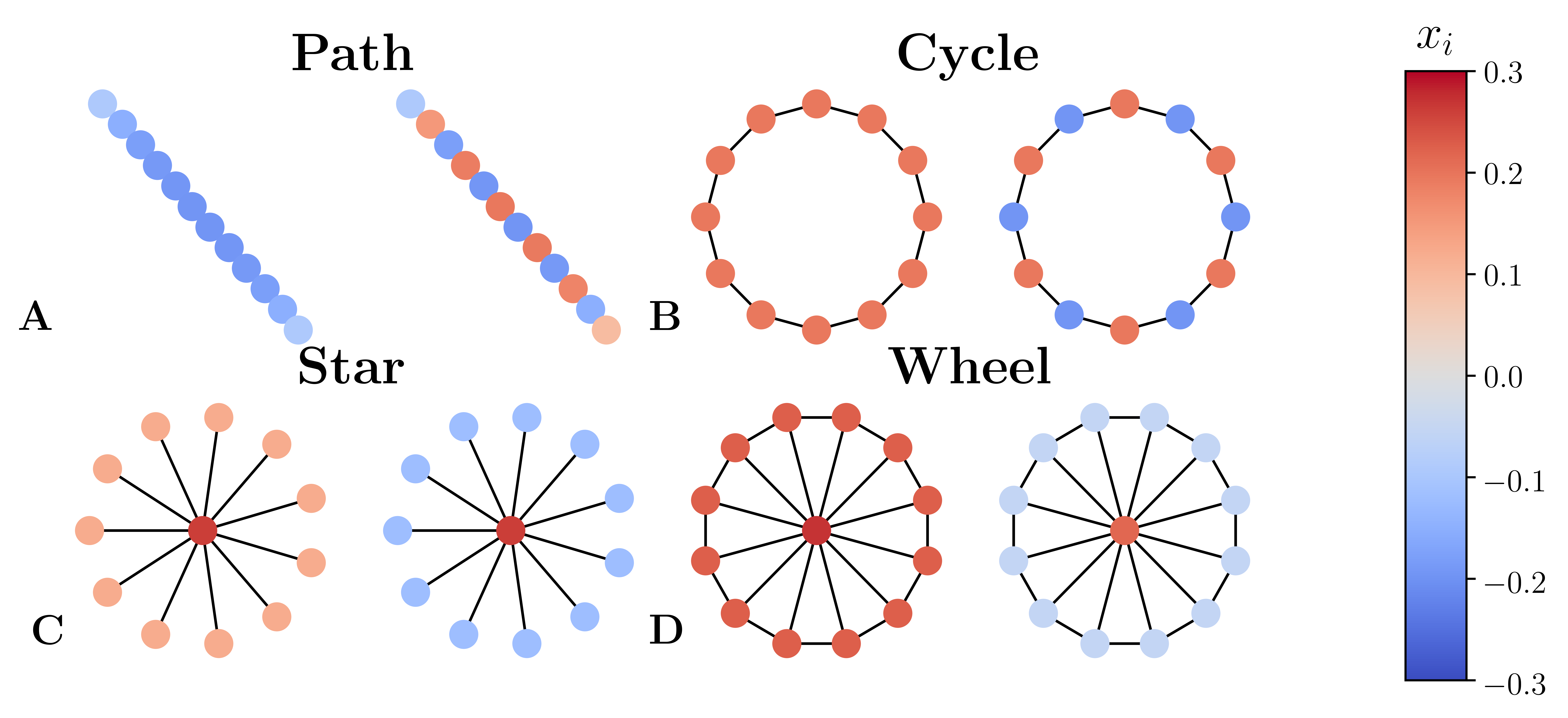}
    \caption{Steady-state patterns of agreement with $\gamma = 1.3$  (left) and disagreement with $\gamma = -1.3$  (right)  from simulation of opinion dynamics \eqref{eq:opinion_dynamics_hom} for four undirected graph types. %The color of the %pictured 
    Node color represents opinion $x_{i}$ %the node converged to 
    at $t = 500$. All nodes have $b_{i} = 0$ and randomized initial opinions drawn from %uniform distribution 
    $U(-1,1)$. %between -1 and 1. 
    Parameters: $d = 1$, $\alpha = 1.2$, %; $\gamma = 1.3$ for left plots and $\gamma = -1.3$ for the right plots; 
    $u = 0.31$ for path and cycle, $u = 0.26$ for star and wheel.}
    \label{fig:GraphClasses} \vspace*{-5mm}
\end{figure} 
 
%So far we have learned that network agreement and disagreement equilibria arise for \eqref{eq:opinion_dynamics_hom} whenever interactions between agents are cooperative or competitive, respectively. 
Network consensus and dissensus are special cases of agreement and disagreement described in Theorem \ref{thm:agree_disagree}. %To study their occurrence, 
Let the {\it consensus space} be  $W_{c} := \operatorname{span}\{ \mathbf{1} \}$ and the {\it dissensus space} be $W_{d} := \{ \mathbf{x} \in \mathds{R}^{N_{a}} : \sum_{i = 1}^{N_{a}} x_{i} = 0 \}$. Observe that $W_c$ and $W_d$ are orthogonal complements in $\mathds{R}^{N_{a}}$. Outside a $\vartheta$-neighborhood of the origin, the $\vartheta$-neighborhood of $W_c$ is made of consensus solutions, whereas the $\vartheta$-neighborhood of $W_d$ is made of dissensus solutions. %In the remainder of this section 
We show next that for %well structured 
graphs in which all agents have the same number of neighbors, the agreement and disagreement equilibria of Theorem \ref{thm:agree_disagree} correspond to %network
consensus and dissensus.

\begin{definition}
%A $K$-regular graph 
$\hat{G} = (\hat{V}, \hat{E})$ is a {\em $K$-regular graph} %for which each 
if every vertex $i \in \hat{V}$ has exactly $K$ neighbors.
%, i.e. $\sum_{k = 1}^{N_{a}}\hat{a}_{ik} = K$ for all $i \in \hat{V}$.  
\label{def:reg}
\end{definition}

\begin{lemma}
If $\bar{G}$ is undirected, connected, and $K$-regular, all vectors $\mathbf{x} = (x_{1}, \dots, x_{N_{a}}) \in W(\lambda_{min})$ satisfy $ \sum_{i = 1}^{N_{a}} x_{i} = 0$. \label{lem:Kregular}
\end{lemma}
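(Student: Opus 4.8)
The plan is to exploit the fact that for a $K$-regular graph the all-ones vector is automatically an eigenvector of $\bar A$, and that symmetry of $\bar A$ forces eigenspaces for distinct eigenvalues to be orthogonal. Concretely, since every vertex of $\bar G$ has exactly $K$ neighbours, each row of $\bar A$ sums to $K$, so $\bar A \mathbf{1} = K \mathbf{1}$; hence $K$ is an eigenvalue of $\bar A$ with $\mathbf{1} \in W(K)$. Moreover $\bar A$ is a nonnegative matrix with constant row sums equal to $K$, so its spectral radius is $K$ and $\lambda_{max} = K$ (this is also consistent with Lemma~\ref{lem:lambdas}A once one notes that a connected undirected $K$-regular graph on $N_a \ge 2$ vertices is strongly connected). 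Since $\bar G$ is undirected, $\bar A$ is symmetric, hence diagonalizable, so its generalized eigenspaces coincide with its ordinary eigenspaces, and eigenspaces associated with distinct eigenvalues are mutually orthogonal in $\mathds{R}^{N_a}$.

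Next I would record that $\lambda_{min} \neq \lambda_{max} = K$. Indeed, for $N_a \ge 2$ connectedness forces $K \ge 1$, so $\bar A$ has the positive eigenvalue $K$; combined with ${\rm tr}\,\bar A = 0$ this gives $\lambda_{min} < 0 < K$ (the case $N_a = 1$ being degenerate and excluded, as it is already in Lemma~\ref{lem:lambdas}B). Therefore $W(\lambda_{min})$ and $W(K) \supseteq \operatorname{span}\{\mathbf{1}\}$ are orthogonal subspaces, so every $\mathbf{x} = (x_1, \dots, x_{N_a}) \in W(\lambda_{min})$ satisfies $\mathbf{x}^\top \mathbf{1} = \sum_{i=1}^{N_a} x_i = 0$, which is exactly the assertion. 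In the language introduced just before Definition~\ref{def:reg}, this says $W(\lambda_{min}) \subseteq W_d$, so the disagreement branch of Theorem~\ref{thm:agree_disagree}B lies in the dissensus space.

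There is no real obstacle here: the only thing to be careful about is the passage from \emph{generalized} eigenspaces (as $W(\cdot)$ is defined in the paper) to ordinary eigenspaces, which is immediate from symmetry of $\bar A$, and the trivial bookkeeping that $\lambda_{min} \ne K$ so the orthogonality actually applies. If one wanted to avoid even invoking $\lambda_{max} = K$ explicitly, it would suffice to note directly that $\mathbf{1}$ is an eigenvector for the eigenvalue $K > 0 > \lambda_{min}$ and conclude orthogonality of $W(\lambda_{min})$ to $\mathbf{1}$ from symmetry alone.
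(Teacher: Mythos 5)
Your proposal is correct and follows essentially the same route as the paper: identify $\lambda_{max}=K$ with eigenvector $\mathbf{1}$, use symmetry of $\bar A$ to get orthogonality of eigenspaces for distinct eigenvalues, and conclude $W(\lambda_{min})\subseteq W_d$. You simply spell out the details the paper leaves implicit (why $\lambda_{max}=K$, why $\lambda_{min}\neq K$, and why generalized eigenspaces reduce to ordinary ones), which is fine but not a different argument.
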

\begin{proof}
%Let $\mathbf{1} \in \mathds{R}^{N_{a}}$ be a vector of ones.
Observe that for a connected, $K$-regular graph, $\lambda_{max}=K$ and $W(\lambda_{max}) = W_{c}$.  Because $\bar A$ is symmetric, all its generalized eigenspaces are orthogonal and, thus, $W(\lambda_{min}) \subseteq W_{d}$. The statement of the lemma follows.
\end{proof}

\begin{theorem}[Consensus and Dissensus]
 If $\bar{G}$ is undirected, connected, and $K$-regular, the agreement bifurcations at $u = u_{a}$ with $\gamma > 0$ give rise to consensus solutions $|x_{i} - x_{k}|<\vartheta$ for all $i,k \in V$, and the disagreement bifurcations at $u = u_{d}$ with $\gamma < 0$ give rise to dissensus solutions %$\mathbf{x} \in \mathds{R}^{N_{a}} : 
 $\left|\sum_{i = 1}^{N_{a}} x_{i}\right| < \vartheta N_a$. \label{thm:consensus_dissensus}
\end{theorem}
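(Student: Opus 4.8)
The plan is to read off the two relevant eigenspaces from Lemma~\ref{lem:Kregular}, feed them into the bifurcation statement of Theorem~\ref{thm:agree_disagree}, and then upgrade ``agreement/disagreement'' to the quantitative conditions $|x_i-x_k|<\vartheta$ and $\big|\sum_i x_i\big|<\vartheta N_a$ using the geometry of the emerging branches. From the proof of Lemma~\ref{lem:Kregular}, for a connected $K$-regular $\bar G$ one has $\lambda_{max}=K$ with Perron eigenvector $\mathbf{1}$, so $W(\lambda_{max})=\operatorname{span}\{\mathbf{1}\}=W_c$; symmetry of $\bar A$ makes $W(\lambda_{min})$ orthogonal to $W(\lambda_{max})$, hence $W(\lambda_{min})\subseteq W_d$. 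These are the only spectral facts needed.

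For part~A, I would first observe that $W_c=\operatorname{span}\{\mathbf{1}\}$ is invariant for \eqref{eq:opinion_dynamics_hom} when $b_i=0$: setting $\mathbf{x}=x\mathbf{1}$ and using $\sum_{k\neq i}\bar a_{ik}=K$ reduces \eqref{eq:opinion_dynamics_hom} to the single scalar equation $\dot x=-dx+uS\big((\alpha+\gamma K)x\big)$, identical in every coordinate. Theorem~\ref{thm:agree_disagree}A says the agreement branch at $u=u_a$ emerges along $W(\lambda_{max})=W_c$; since $W_c$ is one-dimensional and invariant, this branch is exactly the (supercritical pitchfork) branch of the scalar reduced dynamics, so every point on it has the form $\mathbf{x}=x\mathbf{1}$ and $|x_i-x_k|=0<\vartheta$. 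Combined with the nonzero, equal-sign properties already established in Theorem~\ref{thm:agree_disagree}A, these are consensus solutions as soon as $u$ is taken far enough above $u_a$ for the branch to leave the $\vartheta$-neighborhood of the origin.

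For part~B, $W(\lambda_{min})$ is generally not invariant, so I would apply a Lyapunov--Schmidt (equivalently center-manifold) reduction at $u=u_d$: any equilibrium on a branch emerging along $W(\lambda_{min})$ decomposes as $\mathbf{x}=\mathbf{v}+\mathbf{r}$ with $\mathbf{v}\in W(\lambda_{min})$ and $\mathbf{r}$ in a complement satisfying $\|\mathbf{r}\|=o(\|\mathbf{x}\|)$ as $\mathbf{x}\to\mathbf{0}$ (in fact $O(\|\mathbf{x}\|^{3})$, since the $\mathbf{x}\mapsto-\mathbf{x}$ symmetry of \eqref{eq:opinion_dynamics_hom} with $b_i=0$ removes the quadratic term). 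By Lemma~\ref{lem:Kregular}, $\mathbf{v}\in W_d$, i.e.\ $\sum_i v_i=0$, hence $\big|\sum_i x_i\big|=\big|\sum_i r_i\big|\le\sqrt{N_a}\,\|\mathbf{r}\|=o(\|\mathbf{x}\|)$: the average opinion vanishes to higher order in the branch parameter than $\|\mathbf{x}\|$ itself. With Theorem~\ref{thm:agree_disagree}B supplying a disagreeing pair, these points are dissensus solutions on a suitable window of $u>u_d$.

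The one place that needs genuine care is the dissensus case: the definition demands the solution be outside the $\vartheta$-neighborhood of the origin yet inside the $\vartheta$-neighborhood of $W_d$, and right at $u_d$ the branch satisfies both only trivially because it is small. The resolution is exactly the rate comparison $\|\mathbf{r}\|=o(\|\mathbf{v}\|)$ above: as the branch parameter grows the branch leaves the $\vartheta$-ball around $\mathbf{0}$ while its distance to $W_d$ is still $O(\vartheta^{3})\ll\vartheta$, so for every sufficiently small $\vartheta$ there is a nonempty interval of attention values on which it is genuinely a dissensus solution — consensus in part~A being the degenerate version of the same statement, with the branch lying exactly in $W_c$. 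A minor additional point is that $W(\lambda_{min})$ may be higher-dimensional, but the estimate applies to each branch the reduced problem produces, so no case analysis of the kernel is required.
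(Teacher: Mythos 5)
Your argument is correct, and its spectral backbone is the same as the paper's: Lemma~\ref{lem:Kregular} identifies $W(\lambda_{max})=W_c$ and $W(\lambda_{min})\subseteq W_d$, and Theorem~\ref{thm:agree_disagree} supplies the branches emerging along those spaces. Where you diverge is in how the qualitative statement ``branches emerge along $W_c$ (resp.\ $W_d$)'' is upgraded to the inequalities $|x_i-x_k|<\vartheta$ and $\big|\sum_i x_i\big|<\vartheta N_a$: the paper's proof is a two-line tangency argument --- the equilibria are arbitrarily close to the consensus (dissensus) space as $u\searrow u_a$ ($u\searrow u_d$), which already yields the stated inequalities near the bifurcation --- whereas you prove more. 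For agreement you observe that for a $K$-regular graph with $b_i=0$ the line $\operatorname{span}\{\mathbf{1}\}$ is invariant, and since $\lambda_{max}=K$ is simple the unique branch tangent to it must coincide with the scalar pitchfork branch, so the consensus error is exactly zero along the whole branch, not merely small near $u_a$. For disagreement you make the tangency quantitative via Lyapunov--Schmidt and the odd symmetry ($\|\mathbf{r}\|=O(\|\mathbf{x}\|^3)$), which lets you address a point the paper silently passes over: near the bifurcation the solutions satisfy the dissensus inequality only because they are small, and your rate comparison shows the inequality persists on a genuine window where the opinions are no longer negligible. The cost of your route is mild extra machinery (the reduction, and for multi-dimensional $W(\lambda_{min})$ you must take the existence of branches from Theorem~\ref{thm:agree_disagree}B as given, branch by branch); the benefit is a strictly stronger and more honest conclusion. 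One small caveat: labeling the agreement branch a \emph{supercritical} pitchfork requires a sign condition on the cubic term (e.g.\ $S'''(0)<0$, true for $\tanh$ but not forced by the stated hypotheses on $S$); nothing in your argument depends on this, so it is best stated as an aside or dropped.
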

\begin{proof}
%Given the assumptions, 
From Theorem \ref{thm:agree_disagree} and Lemma \ref{lem:Kregular},  an opinion-forming bifurcation emerges along  the consensus space for $\gamma>0$ and dissensus space for $\gamma<0$. This means that the resulting equilibrium solutions are arbitrarily close to the consensus (dissensus) space as $u\searrow u_a$ ($u\searrow u_d$), where the notation $\searrow$ signifies approaching in value from above.
\end{proof}

Figure \ref{fig:GraphClasses}B and Figure \ref{fig:MoreGraphs}A,B are examples of consensus and dissensus on 2-regular graphs (a cycle with an even and odd number of nodes) and a 3-regular graph.

\begin{figure}%[h]
    \centering
    \includegraphics[width = 0.47\textwidth]{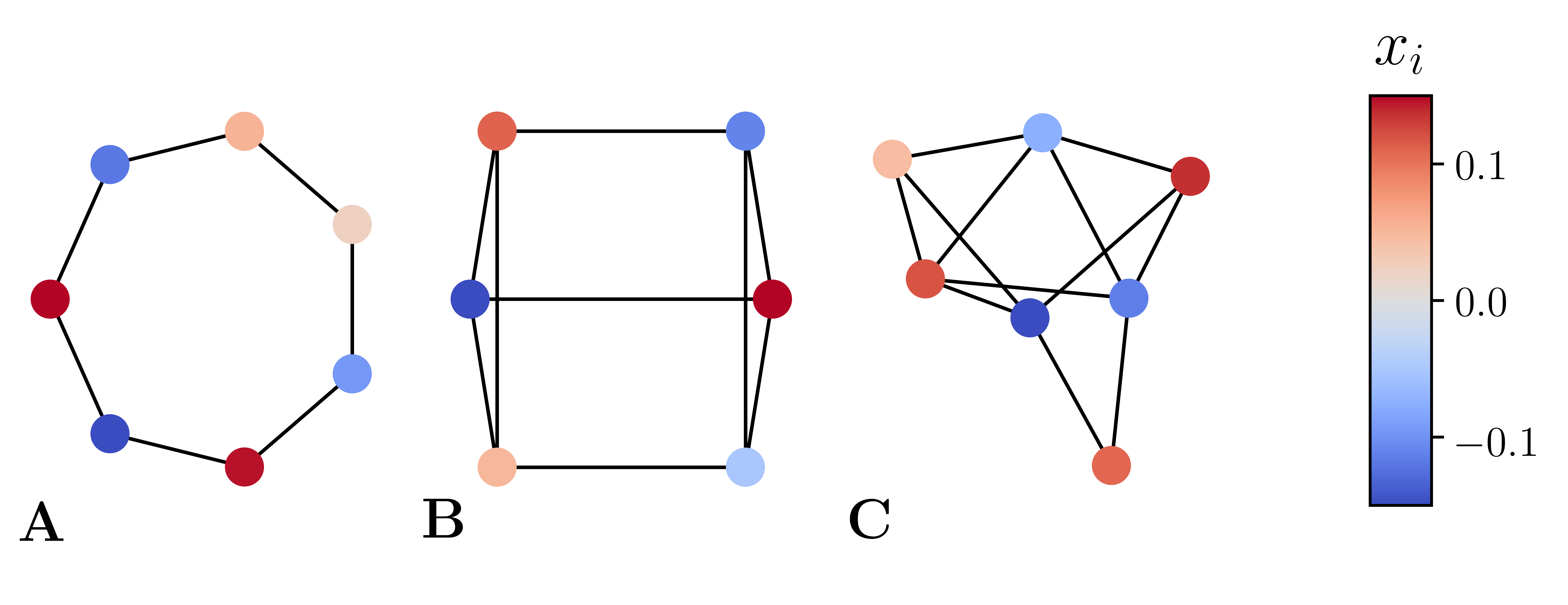}
    \caption{Disagreement patterns on odd cycle (A), 3-regular (B) and  randomly generated (C) graphs.  %The color of the pictured nodes represents the opinion $x_{i}$ the node converged to at $t = 500$ of a simulation. All nodes have $b_{i} = 0$ and start from randomized initial conditions drawn from a uniform distribution between -1 and 1. System 
    Parameters are $d = 1$, $\alpha = 0.5$, $\gamma = -0.5$, $u = u_{d} + 0.01$. All else is as in Fig.~\ref{fig:GraphClasses}.}
    \label{fig:MoreGraphs} \vspace*{-5mm}
\end{figure}

%%%%%%%%%%%%%%%%%%%%%%%%%%%%%%%%%%%%%%%%%%%%%%%%%%%%%%%%%%%%%%%%%%%%%%%%%%%%%%%%%%%%%%%%%%%%%%%%%%%%%%%%%%%%%%%%%%%%%

\section{Agent Centrality \label{sec:patterns}}

In this section we examine how equilibrium opinions of agents depend on their location in the graph. We show that at an opinion-forming bifurcation, an agent's opinion strength is often determined by its relative location in the network as quantified by a suitable centrality measure. 

A \textit{centrality measure}  ranks how central each node is in a network, i.e., measures its influence over some emergent network property. We recall the definition of a well known network centrality measure originally proposed in \cite{bonacich1972}:

\begin{definition}[Eigenvector Centrality]
The entries of the normalized positive left eigenvector corresponding to the eigenvalue $\lambda_{max}$ of $\hat{A}$ for a (directed or undirected) graph $\hat{G}$ provide a centrality measure for the nodes of the graph.\label{def:centrality}
\end{definition}
It is shown in \cite{bonacich2007some} that eigenvector centrality, deriving from the adjacency matrix and not the Laplacian, is particularly useful for graphs on which agents with high in-degree have many neighbors of low in-degree, and is related to several other common graph centrality measures. Let $\mathbf{v}^{c} = (v^{c}_{1}, \dots, v^{c}_{N_{a}})$ be the centrality eigenvector for $\bar{A}$ from Definition~\ref{def:centrality}. For undirected graphs, this centrality vector determines the opinion strength of each agent at an agreement equilibrium predicted by Theorem~\ref{thm:agree_disagree}A, i.e., the larger an agent's centrality, the stronger its opinion at  agreement. %equilibrium.

\begin{theorem} Consider opinion dynamics \eqref{eq:opinion_dynamics_hom} with undirected, connected $\bar{G}$, $u_{i} := u \geq 0$, $b_{i} = 0$, and $\alpha > 0$. Agreement equilibria $\mathbf{x} = (x_{1}, \dots, x_{N_{a}})$ described in Theorem \ref{thm:agree_disagree}.A satisfy $|x_{i}| < |x_{k}|$ if $v^{c}_{i} < v^{c}_{k}$ and $|x_{i}| = |x_{k}|$ if $v^{c}_{i} = v^{c}_{k}$ for all $i,k = 1, \dots, N_{a}$. \label{thm:centrality_agr}
\end{theorem}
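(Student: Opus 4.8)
\emph{Proof proposal.} The plan is to combine the rigidity of the equilibrium equation with the fact, from Theorem~\ref{thm:agree_disagree}.A, that the agreement branch emanates from $\mathbf{0}$ along the one-dimensional eigenspace $W(\lambda_{max})=\operatorname{span}\{\mathbf{v}^{c}\}$, and then to match the resulting opinion profile with the Perron eigenvector $\mathbf{v}^{c}$ of $\bar{A}$.

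First I would remove the sign ambiguity. An agreement equilibrium has $\operatorname{sign}(x_{i})=\operatorname{sign}(x_{k})$ for all $i,k$, and since $b_{i}=0$ makes \eqref{eq:opinion_dynamics_hom} odd under $\mathbf{x}\mapsto-\mathbf{x}$, it suffices to treat the case $x_{i}>0$ for all $i$. At such an equilibrium, $d x_{i}=u\,S\!\left(\alpha x_{i}+\gamma(\bar{A}\mathbf{x})_{i}\right)$. In the relevant range $u_{a}<u<d/\alpha$ (a right-neighborhood of $u_{a}$, since $u_{a}=d/(\alpha+\gamma\lambda_{max})<d/\alpha$ when $\gamma,\lambda_{max}>0$), the map $t\mapsto u S(\alpha t+\gamma y)-dt$ is strictly decreasing for each fixed $y>0$, because $u\alpha S'(\cdot)\le u\alpha<d$; hence $x_{i}$ is its unique root and can be written $x_{i}=g\big((\bar{A}\mathbf{x})_{i}\big)$ for one strictly increasing $g:(0,\infty)\to(0,\infty)$ with $g(0^{+})=0$, which is moreover concave because $S$ is concave on $(0,\infty)$. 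Since $\bar{G}$ is connected, $(\bar{A}\mathbf{x})_{i}>0$ for every $i$, so this representation applies to all components. Consequently the rank ordering of the opinion magnitudes $|x_{i}|$ coincides exactly, ties included, with the rank ordering of the social inputs $(\bar{A}\mathbf{x})_{i}$.

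Second, I would identify that ordering with the centrality ordering, i.e.\ with the ordering of $v^{c}_{i}=(\bar{A}\mathbf{v}^{c})_{i}/\lambda_{max}$. By Theorem~\ref{thm:agree_disagree}.A the branch leaves $\mathbf{0}$ tangent to $\operatorname{span}\{\mathbf{v}^{c}\}$, and Lyapunov--Schmidt reduction together with the $\mathbf{x}\mapsto-\mathbf{x}$ symmetry gives $\mathbf{x}(s)=s\,\mathbf{v}^{c}+O(s^{3})$ along the branch, so $x_{i}(s)=s\,v^{c}_{i}+O(s^{3})$. For small $s>0$ this already forces $|x_{i}|<|x_{k}|$ whenever $v^{c}_{i}<v^{c}_{k}$ (an open condition that then persists along the branch), and it shows $|x_{i}|-|x_{k}|=O(s^{3})$ when $v^{c}_{i}=v^{c}_{k}$. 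To upgrade this to the exact equality and to cover the whole branch rather than only its germ, I would run a Collatz--Wielandt / nonlinear Perron--Frobenius comparison directly on the fixed-point equation $x_{i}=g\big((\bar{A}\mathbf{x})_{i}\big)$: the map $\Phi(\mathbf{p})_{i}=g\big((\bar{A}\mathbf{p})_{i}\big)$ is order preserving and strictly subhomogeneous on the positive cone (as $g$ is increasing and strictly concave through the origin), with $D\Phi(\mathbf{0})=g'(0)\bar{A}$ having Perron eigenvector $\mathbf{v}^{c}$; sandwiching $\mathbf{x}$ between positive multiples of $\mathbf{v}^{c}$ and exploiting monotonicity and concavity, one transfers the centrality ordering first onto $\mathbf{x}$, then via the first step onto the $(\bar{A}\mathbf{x})_{i}$ and hence onto the $|x_{i}|$.

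The step I expect to be the main obstacle is precisely this last identification: showing that the nonlinear distortion of $\mathbf{v}^{c}$ produced by $S$ never reorders two components, and in particular never splits a centrality tie. The leading-order expansion disposes of strict inequalities cleanly, but the equality case genuinely rests on the monotone/concave structure of $\Phi$. A self-contained shortcut for the ties is to invoke equivariance: whenever $i$ and $k$ lie in a common orbit of the automorphism group of $\bar{G}$, one has $v^{c}_{i}=v^{c}_{k}$ and, the primary branch being symmetric, $x_{i}=x_{k}$ --- the mechanism that is exploited systematically in Section~\ref{sec:symmetry}.
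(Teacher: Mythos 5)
Your second step is, in essence, the paper's entire proof: since $\bar{A}$ is symmetric, the right $\lambda_{max}$-eigenvector coincides with the left (centrality) eigenvector $\mathbf{v}^{c}$, so $W(\lambda_{max})=\operatorname{span}\{\mathbf{v}^{c}\}$, and Theorem~\ref{thm:agree_disagree}.A says the agreement branch emerges from $\mathbf{0}$ along this eigenspace; hence along the branch $x_{i}\approx s\,v^{c}_{i}$ and the claimed ordering of magnitudes follows. The paper reads the theorem as this local statement about the bifurcating branch and stops there --- its proof is two lines --- so your Lyapunov--Schmidt expansion plus the right-equals-left eigenvector identification already reproduces it, and the rest of your machinery is aimed at a stronger, global version of the claim that the paper does not attempt.

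On that stronger reading, the additional steps do not yet close the argument, and you correctly flag the weak point yourself. The monotone representation $x_{i}=g\bigl((\bar{A}\mathbf{x})_{i}\bigr)$ is a nice observation, but it only transfers the problem to ordering the social inputs $(\bar{A}\mathbf{x})_{i}$, whose ordering away from the bifurcation is exactly what is in question; the assertion that the strict inequalities ``persist along the branch'' because they are open is not a proof (openness gives persistence for nearby parameters, not along the whole branch up to a possible crossing). For the tie case, the Collatz--Wielandt / nonlinear Perron--Frobenius sandwich is only sketched: monotonicity and strict subhomogeneity of $\Phi$ give uniqueness-type conclusions for the positive fixed point, but they do not by themselves force exact componentwise equality $x_{i}=x_{k}$ whenever $v^{c}_{i}=v^{c}_{k}$, since higher-order terms in the branch expansion need not respect centrality ties that are not enforced by structure. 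And the proposed shortcut via equivariance covers only ties coming from a common orbit of $\operatorname{Aut}(\bar{G})$, which is a strictly stronger hypothesis than equal eigenvector centrality, so it cannot substitute for the general equality claim. In short: the part of your argument that matches the paper is correct and sufficient for the statement as the paper proves it; the extra global claims remain unproven as written.
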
 
\begin{proof}
Let $\mathbf{v} = (v_{1}, \dots, v_{N_{a}})$ be the normalized right $\lambda_{max}$-eigenvector of $\bar{A}$. By symmetry of $\bar{A}$, $v_{i} = v^{c}_{i}$ for all $i = 1, \dots, N_{a}$ and the theorem follows from Theorem \ref{thm:agree_disagree}.A.
\end{proof}

%According to Theorem \ref{thm:centrality_agr}, for all connected undirected graphs the magnitudes of equilibrium agreement opinions of agents will directly reflect the agents' influence on the network. Although all agents will favor the same option, the more central agents will have a stronger opinion. In light of this result we can interpret the first half of Theorem \ref{thm:centrality_agr}.B in terms of agents' centrality.

%In the remainder of this section we demonstrate that 
We next state an analogous result for disagreement equilibria on a common class of graphs called \textit{bipartite graphs}.
\begin{definition}
%A bipartite graph 
Undirected $\hat{G} = (\hat{V}, \hat{E})$ is a {\em bipartite graph} %whose vertex set 
if $\hat V$ can be subdivided into disjoint subsets $\hat{V}_{1}$, $\hat{V}_{2}$ such that every edge $\hat{e}_{ik} \in \hat{E}$ connects a vertex in $\hat{V}_{1}$ to one in $\hat{V}_{2}$.  \label{def:bipartite}
\end{definition}

%\noindent Some examples of common bipartite graphs are groups of agents connected in a line, star, or tree shape. 
\noindent In the following, we show that for  disagreement on bipartite graphs, an agent's partition membership  determines the sign of its equilibrium opinion and an agent's centrality in the network determines the strength of its equilibrium opinion. % strength.  of commitment to that opinion. 
\begin{lemma}[%Right 
$\lambda_{min}$-Eigenvector of Bipartite Graph]
Suppose $\bar{G}$ is a bipartite graph and let $\hat{V}_{1}, \hat{V}_{2}$ be the two vertex subsets of $V$ from Definition \ref{def:bipartite}. Let $\mathbf{w} = (w_{1}, \dots, w_{N_{a}})$ be the eigenvector corresponding to $\lambda_{min}$ of $\bar{A}$. Then $w_{i}, w_{k} \neq 0$ and $\operatorname{sign}(w_{i}) = - \operatorname{sign}(w_{k})$ for all $i \in \hat{V}_{1}$, $k \in \hat{V}_{2}$. Moreover,  $|w_{i}| = v^{c}_{i}$ for all $i = 1, \dots, N_{a}$. \label{lem:diagr_centrality}
\end{lemma}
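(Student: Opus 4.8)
The plan is to exploit the two-block structure that bipartiteness forces on $\bar A$. Ordering the vertices so that those of $\hat V_1$ come first and those of $\hat V_2$ second, and using that $\bar G$ has no edges inside $\hat V_1$ or inside $\hat V_2$ and that self-loops have been removed from $\bar G$, one has
\[
\bar A = \begin{pmatrix} 0 & B \\ B^{\top} & 0 \end{pmatrix}.
\]
I take $\bar G$ to be connected, as in the rest of this section, so that the Perron--Frobenius statements of Lemma~\ref{lem:lambdas} apply. First I would record that, since $\bar A$ is symmetric, the centrality eigenvector $\mathbf v^{c}$ (Definition~\ref{def:centrality}) is simultaneously a left and right $\lambda_{max}$-eigenvector, that $\lambda_{max}=\rho(\bar A)>0$ is simple, and that $\mathbf v^{c}=(\mathbf v^{c}_1,\mathbf v^{c}_2)$ is entrywise positive. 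Writing $\bar A\,\mathbf v^{c}=\lambda_{max}\mathbf v^{c}$ block-wise gives $B\,\mathbf v^{c}_2=\lambda_{max}\mathbf v^{c}_1$ and $B^{\top}\mathbf v^{c}_1=\lambda_{max}\mathbf v^{c}_2$.

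The key step is the sign-flip trick. Set $\mathbf w:=(\mathbf v^{c}_1,-\mathbf v^{c}_2)$. Then $\bar A\,\mathbf w=(-B\,\mathbf v^{c}_2,\;B^{\top}\mathbf v^{c}_1)=(-\lambda_{max}\mathbf v^{c}_1,\;\lambda_{max}\mathbf v^{c}_2)=-\lambda_{max}\,\mathbf w$, so $-\lambda_{max}$ is an eigenvalue of $\bar A$. Because $\bar A$ is nonnegative, every eigenvalue $\mu$ satisfies $|\mu|\le\rho(\bar A)=\lambda_{max}$, hence $\mu\ge-\lambda_{max}$; since the value $-\lambda_{max}$ is attained, this forces $\lambda_{min}=-\lambda_{max}$. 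Moreover the linear map $(\mathbf y_1,\mathbf y_2)\mapsto(\mathbf y_1,-\mathbf y_2)$ is a bijection $W(\lambda_{max})\to W(\lambda_{min})$, so $\lambda_{min}$ is simple and $W(\lambda_{min})=\operatorname{span}\{\mathbf w\}$, with the normalization inherited from $\mathbf v^{c}$ giving $\|\mathbf w\|=1$. Consequently, up to the global sign fixed by the normalization, $w_i=v^{c}_i>0$ for $i\in\hat V_1$ and $w_i=-v^{c}_i<0$ for $i\in\hat V_2$, which delivers all three claims: $w_i\ne 0$ for every $i$, $\operatorname{sign}(w_i)=-\operatorname{sign}(w_k)$ for $i\in\hat V_1$, $k\in\hat V_2$, and $|w_i|=v^{c}_i$ for all $i$.

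I do not anticipate a genuine obstacle; the points needing care are bookkeeping. One must fix the sign/normalization convention so the identity $|w_i|=v^{c}_i$ holds exactly rather than merely up to a scalar, and one must verify that $\lambda_{min}$ is simple so that ``the eigenvector $\mathbf w$'' is unambiguous --- both handled by the bijection above. It is also worth stating the standing connectedness assumption on $\bar G$ explicitly (without it $\lambda_{max}$ need not be simple and $\mathbf v^{c}$ need not be strictly positive), and noting in passing that $\lambda_{min}=-\lambda_{max}<0$ is consistent with Lemma~\ref{lem:lambdas}.B.
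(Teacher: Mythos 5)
Your proof is correct, but it takes a different route from the paper: the paper's proof is essentially a citation --- it observes that by symmetry of $\bar A$ the $\lambda_{max}$-eigenvector equals $\mathbf v^c$ and then invokes \cite[Theorem 1.2]{stevanovic2014spectral}, which asserts exactly that the $\lambda_{min}$-eigenvector of a connected bipartite graph has the same entry magnitudes as the $\lambda_{max}$-eigenvector with signs reflecting the bipartition. You instead prove that underlying fact from scratch: the block form $\bar A = \bigl(\begin{smallmatrix} 0 & B \\ B^{\top} & 0\end{smallmatrix}\bigr)$, the sign-flip $\mathbf w = (\mathbf v^c_1, -\mathbf v^c_2)$ giving $\bar A \mathbf w = -\lambda_{max}\mathbf w$, the spectral-radius bound forcing $\lambda_{min} = -\lambda_{max}$, and the involution $(\mathbf y_1,\mathbf y_2)\mapsto(\mathbf y_1,-\mathbf y_2)$ between the two eigenspaces giving simplicity of $\lambda_{min}$. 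Your version buys self-containedness and makes explicit two facts the paper leaves implicit: that $\lambda_{min}$ is simple (so ``the eigenvector'' in the lemma statement is well defined) and that connectedness of $\bar G$ is genuinely needed (the lemma as stated only says ``bipartite,'' but without connectedness $\mathbf v^c$ need not be strictly positive and the nonvanishing claim $w_i \neq 0$ can fail); flagging that standing assumption is a worthwhile addition. The paper's citation-based proof is shorter and defers the linear algebra to a standard reference, which is reasonable for a conference paper, but the content of the two arguments is the same.
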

\begin{proof} For all bipartite graphs, $\operatorname{dim} W(\lambda_{min}) = 1$. By symmetry of $\bar{A}$, the  $\lambda_{max}$-eigenvector is equal to  $\mathbf{v}^{c}$. 
The lemma then follows from \cite[Theorem 1.2]{stevanovic2014spectral}, which states that the terms of the $\lambda_{min}$-eigenvector are equal in magnitude to the terms of the $\lambda_{max}$-eigenvector, with the sign structure reflecting the bipartition.
\end{proof}

\begin{theorem}[Disgreement Opinion Strength Reflects Agent Centrality on Bipartite Graphs]
Consider \eqref{eq:opinion_dynamics_hom} with undirected bipartite $\bar{G}$, $u_{i} := u \geq 0$, $b_{i} = 0$, $\alpha > 0$, and $\gamma < 0$. Let $\hat{V}_{1}, \hat{V}_{2}$ be the two subsets of $V$ from Definition \ref{def:bipartite}. Disagreement equilibria $\mathbf{x} = (x_{1}, \dots, x_{N_{a}})$ described in Theorem \ref{thm:agree_disagree}.B satisfy $|x_{i}| < |x_{k}|$ if $v^{c}_{i} < v^{c}_{k}$ and $|x_{i}| = |x_{k}|$ if $v^{c}_{i} = v^{c}_{k}$ for all $i,k = 1, \dots, N_{a}$. Moreover,  $\operatorname{sign}(x_{i}) = - \operatorname{sign}(x_{k})$ for all $i \in \hat{V}_{1}$, $k \in \hat{V}_{2}$ \label{thm:centrality_dis}.
\end{theorem}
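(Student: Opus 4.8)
The plan is to reduce the claim to the leading-order structure of the bifurcating branch supplied by Theorem~\ref{thm:agree_disagree}.B together with the eigenvector description in Lemma~\ref{lem:diagr_centrality}, exactly as in the proof of Theorem~\ref{thm:centrality_agr}. First I would record that for a connected undirected bipartite $\bar G$ (connectedness being implicit, as it is needed to invoke Theorem~\ref{thm:agree_disagree}.B) the spectrum of $\bar A$ is symmetric about the origin, so $\lambda_{min} = -\lambda_{max}$; since $\lambda_{max}$ is simple by Perron--Frobenius (cf.\ Lemma~\ref{lem:lambdas}), $\lambda_{min}$ is simple as well, so $W(\lambda_{min}) = \operatorname{span}\{\mathbf w\}$ is one-dimensional, spanned by the $\lambda_{min}$-eigenvector $\mathbf w = (w_1,\dots,w_{N_a})$ appearing in Lemma~\ref{lem:diagr_centrality}.

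Next I would parametrize the branch of disagreement equilibria emanating from $\mathbf x = \mathbf 0$ at $u = u_d$. At $u = u_d$ the Jacobian $J(\mathbf 0)$ has a simple zero eigenvalue with eigenvector $\mathbf w$ and all other eigenvalues strictly negative; since \eqref{eq:opinion_dynamics_hom} with $b_i = 0$ is odd ($S$ odd $\Rightarrow h_i(-\mathbf x) = -h_i(\mathbf x)$), a one-dimensional center-manifold / Lyapunov--Schmidt reduction produces a pitchfork whose branch can be written $\mathbf x(s) = s\,\mathbf w + \mathbf r(s)$ with $\mathbf r(s)$ lying in the $\bar A$-invariant complement of $W(\lambda_{min})$ and $\mathbf r(s) = O(s^3)$ (the cubic order forced by the odd symmetry); equivalently $\mathbf x/\|\mathbf x\| \to \pm\,\mathbf w/\|\mathbf w\|$ as $u \searrow u_d$. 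Now I would invoke Lemma~\ref{lem:diagr_centrality}, namely $|w_i| = v^c_i$ for all $i$ and $\operatorname{sign}(w_i) = -\operatorname{sign}(w_k)$ whenever $i \in \hat V_1$, $k \in \hat V_2$. Reading off components gives $x_i = s\,w_i + O(s^3)$, so for small $s \neq 0$ we get $\operatorname{sign}(x_i) = \operatorname{sign}(s)\operatorname{sign}(w_i)$, hence $\operatorname{sign}(x_i) = -\operatorname{sign}(x_k)$ across the bipartition; and $|x_i| = |s|\,v^c_i\,(1 + O(s^2))$, so $v^c_i < v^c_k$ forces $|x_i| < |x_k|$ once $u$ is sufficiently close to $u_d$.

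The delicate point, and the one I would treat most carefully, is the exact equality $v^c_i = v^c_k \Rightarrow |x_i| = |x_k|$: the leading-order term equalizes the two magnitudes, but the $O(s^3)$ correction $\mathbf r(s)$ need not preserve it for an arbitrary bipartite graph, so strictly this equality holds along the branch only in the limit $u \searrow u_d$ --- which is precisely the sense in which Theorems~\ref{thm:agree_disagree} and \ref{thm:consensus_dissensus} speak of solutions lying ``along'' an eigenspace, and the sense I would adopt here (as in Theorem~\ref{thm:centrality_agr}). I would additionally point out the clean subcase in which $v^c_i = v^c_k$ is enforced by a graph automorphism swapping $i$ and $k$ (the setting of Section~\ref{sec:symmetry}): there \eqref{eq:opinion_dynamics_hom} is equivariant under the corresponding permutation, the bifurcating branch is invariant under it, and $|x_i| = |x_k|$ then holds exactly for all $u$.
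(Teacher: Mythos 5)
Your proposal is correct and follows essentially the same route as the paper, whose proof is simply the one-line combination of Theorem~\ref{thm:agree_disagree}.B with Lemma~\ref{lem:diagr_centrality}. Your added details --- the simplicity of $\lambda_{min}$ for a connected bipartite graph, the leading-order expansion $\mathbf{x}(s) = s\,\mathbf{w} + O(s^3)$ along the pitchfork branch, and the caveat that the magnitude relations are claimed in the near-bifurcation (``along $W(\lambda_{min})$'') sense --- are a faithful elaboration of what the paper leaves implicit rather than a different argument.
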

\begin{proof}
This follows by Theorem \ref{thm:agree_disagree}B and Lemma \ref{lem:diagr_centrality}.
\end{proof}

All graphs shown in Figure \ref{fig:GraphClasses} have a simple $\lambda_{min}$. Further, the cycle, path, and star are bipartite graphs and the sign distribution of nodes across options reflects their bipartition. Observe that the magnitude of opinions reflects relative centrality on all the shown graphs in both the agreement and disagreement parameter regimes, including the wheel which is not bipartite. This suggests that the $\lambda_{min}$-eigenvector can sometimes be related to a notion of agent centrality even when it does not precisely equal $\mathbf{v}^{c}$. In contrast, graphs in Figure \ref{fig:MoreGraphs} are not bipartite, and graphs $A,B$ have a two-dimensional $W(\lambda_{min})$. The distribution of disagreement opinions on these graphs is more heterogeneous in magnitude, despite the first two graphs being regular and all agents being equally central. Additional equilibrium distributions of opinions are possible in the disagreement regime. 

%%%%%%%%%%%%%%%%%%%%%%%%%%%%%%%%%%%%%%%%%%%%%%%%%%%%%%%%%%%%%%%%%%%%%%%%%%%%%%%%%%%%%%%%%%%%%%%%%%%%%%%%%%%%%%%%%%%%%

\section{Graph Symmetry \label{sec:symmetry}}

In this section we relate the patterns of emergent opinions on solution branches described in Theorem \ref{thm:agree_disagree} to the symmetry of the underlying graph. We first state several key definitions.

Recall that a \textit{graph automorphism} of $\bar{G}$ is a permutation of vertices that preserves adjacency (i.e., maps edges to edges and non-edges to non-edges). The \textit{automorphism group} $\operatorname{Aut}(\bar{G}) := \Gamma$ is the set of all graph automorphisms of $\bar{G}$. Define the group of permutations of a set of $n$ symbols $\{1, \dots, n \}$ as $S_{n}$. Then for graph $\bar{G}$ with $N_{a}$ agent vertices, $\Gamma \subseteq S_{N_{a}}$. The graph automorphism group of $\bar{G}$ can also be interpreted as the group of permutation matrices which commute with its adjacency matrix $\bar{A}$. Each element $\rho \in \Gamma$ can be represented by a permutation matrix $P_{\rho}$ which satisfies $P_{\rho} \bar{A} = \bar{A} P_{\rho}$ \cite[Proposition 3.8.1]{cvetkovic2009introduction}. We commonly refer to $\rho$ as a \textit{symmetry} of $\bar{G}$ and $\Gamma$ as its \textit{symmetry group}. 

A connected notion is the \textit{equivariance} of a dynamical system with respect to a symmetry group. Consider the opinion dynamics \eqref{eq:opinion_dynamics_hom} as a dynamical system $ \dot{\mathbf{x}} = \mathbf{h}(\mathbf{x})$, where the map $\mathbf{h}: \mathds{R}^{N_{a}} \to \mathds{R}^{N_{a}}$ is $\mathbf{h}(\mathbf{x}) = (h_{1}(\mathbf{x}),\dots,h_{N_{a}}(\mathbf{x}) )$. Let $\Sigma$ be a compact Lie group with elements $\sigma$ that act on $\mathds{R}^{N_{a}}$. Then $\mathbf{h}$ is $\sigma$-\textit{equivariant} for some $\sigma \in \Sigma$ if $\sigma \mathbf{h}(\mathbf{x}) = \mathbf{h}(\sigma\mathbf{x})$, and $\mathbf{h}$ is $\Sigma$-\textit{equivariant} if this holds true for all $\sigma\in \Sigma$ \cite[Definition 1.7]{GolubitskySymmetryPerspective}. In the following theorem we show that the symmetry group of the graph $\bar{G}$ is also a symmetry group of the opinion dynamics \eqref{eq:opinion_dynamics_hom}  with zero input.
\begin{theorem}[$\Gamma$-equivariance]
\label{thm:equivariance} Consider opinion dynamics \eqref{eq:opinion_dynamics_hom} with $b_{i} = 0$ for $i = 1, \dots, N_{a}$. Let $\Gamma$ be the automorphism group of $\bar{G}$. The opinion dynamics are $\Gamma$-equivariant. 
\end{theorem}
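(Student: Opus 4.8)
The plan is to verify directly that for each automorphism $\rho \in \Gamma$ with permutation matrix $P_\rho$, the vector field $\mathbf{h}$ of \eqref{eq:opinion_dynamics_hom} satisfies $P_\rho \mathbf{h}(\mathbf{x}) = \mathbf{h}(P_\rho \mathbf{x})$, taking $\Sigma = \Gamma$ acting on $\R^{N_a}$ by these permutation matrices. It suffices to check the identity componentwise: writing $\rho$ as the underlying permutation of $\{1,\dots,N_a\}$, the $i$-th component of $P_\rho \mathbf{h}(\mathbf{x})$ is $h_{\rho^{-1}(i)}(\mathbf{x})$, while the $i$-th component of $\mathbf{h}(P_\rho\mathbf{x})$ is $h_i(P_\rho \mathbf{x})$, and $(P_\rho \mathbf{x})_k = x_{\rho^{-1}(k)}$. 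So the goal reduces to showing $h_{\rho^{-1}(i)}(\mathbf{x}) = h_i(P_\rho\mathbf{x})$ for every $i$.

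First I would expand $h_i(P_\rho \mathbf{x})$ using the homogeneity of the coefficients: since $b_i = 0$, $d_i = d$, $u_i = u$, $a_{ii} = \alpha$, and $a_{ik} = \gamma\bar a_{ik}$, we get
\begin{equation}
h_i(P_\rho\mathbf{x}) = -d\, x_{\rho^{-1}(i)} + u\, S\!\left(\alpha\, x_{\rho^{-1}(i)} + \gamma \sum_{k \neq i} \bar a_{ik}\, x_{\rho^{-1}(k)}\right).
\end{equation}
Re-indexing the sum via $j = \rho^{-1}(k)$ turns $\sum_{k\neq i}\bar a_{ik} x_{\rho^{-1}(k)}$ into $\sum_{j \neq \rho^{-1}(i)} \bar a_{i,\rho(j)} x_j$. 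The key step is then to use the automorphism property in the form $P_\rho \bar A = \bar A P_\rho$, equivalently $\bar a_{\rho(p),\rho(q)} = \bar a_{pq}$ for all $p,q$, which gives $\bar a_{i,\rho(j)} = \bar a_{\rho(\rho^{-1}(i)),\rho(j)} = \bar a_{\rho^{-1}(i),j}$. Substituting this back yields exactly
\begin{equation}
h_i(P_\rho\mathbf{x}) = -d\, x_{\rho^{-1}(i)} + u\, S\!\left(\alpha\, x_{\rho^{-1}(i)} + \gamma \sum_{j \neq \rho^{-1}(i)} \bar a_{\rho^{-1}(i),j}\, x_j\right) = h_{\rho^{-1}(i)}(\mathbf{x}),
\end{equation}
which is the desired componentwise identity. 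Since this holds for all $i$ and all $\rho\in\Gamma$, the dynamics are $\Gamma$-equivariant.

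This argument is essentially a bookkeeping exercise, so there is no deep obstacle; the one place to be careful is the index juggling — correctly tracking whether $P_\rho$ acts as $\rho$ or $\rho^{-1}$ on coordinates, and making sure the self-loop term $\alpha x_i$ and the neighbor sum transform consistently (both depend only on the permuted data because $d$, $u$, $\alpha$ are agent-independent and $\bar A$ is $\rho$-invariant). One could also phrase the whole proof in matrix form — noting $P_\rho$ is a permutation matrix so it commutes with the diagonal part $-d\mathcal{I} + u(\cdot)$ trivially, and that $S$ applied entrywise commutes with any permutation, reducing everything to $P_\rho(\alpha\mathcal{I} + \gamma\bar A)\mathbf{x} = (\alpha\mathcal{I}+\gamma\bar A)P_\rho\mathbf{x}$, which is immediate from $P_\rho\bar A = \bar A P_\rho$ — but I expect the componentwise version to be the cleanest to present.
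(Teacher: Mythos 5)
Your proposal is correct and is essentially the paper's own argument: a direct verification of $P_{\rho}\mathbf{h}(\mathbf{x}) = \mathbf{h}(P_{\rho}\mathbf{x})$ using the commutation $P_{\rho}\bar{A} = \bar{A}P_{\rho}$ (equivalently $\bar a_{\rho(p)\rho(q)} = \bar a_{pq}$) together with the fact that the entrywise saturation commutes with permutations. The paper presents exactly the matrix-form version you sketch at the end (writing $\hat{\mathbf{S}}$ for the entrywise map and pulling $P_{\rho}$ through $\alpha\mathcal{I}+\gamma\bar A$), while you carry out the same computation componentwise with correct index bookkeeping.
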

\begin{proof}
Let $\rho$ be an element of $\Gamma$, and define the function $\hat{\mathbf{S}}: \mathds{R}^{N_{a}} \to \mathds{R}^{N_{a}}$ as $\hat{\mathbf{S}}(\mathbf{y}) = ( S(y_{1}), \dots, S(y_{N_{a}}))$. Every permutation $\rho$ can be decomposed into a product of transpositions $\rho_{ik}$ that interchange elements in positions $i$ and $k$ of a set. For an arbitrary transposition, it is easy to see that $P_{\rho_{ik}}\hat{\mathbf{S}}\big(\mathbf{y} \big) = \hat{\mathbf{S}}\big( P_{\rho_{ik}}\mathbf{y} \big) $, and by iteratively applying transpositions we see that for all permutations $\rho$, $P_{\rho}\hat{\mathbf{S}}\big(\mathbf{y} \big) = \hat{\mathbf{S}}\big( P_{\rho}\mathbf{y} \big)$. Using this and the definition of graph automorphism we get 
\begin{multline*}
    P_{\rho} \mathbf{h}(\mathbf{x}) = - d P_{\rho}  \mathbf{x} + u \hat{\mathbf{S}}\big(P_{\rho} (\alpha  \mathcal{I} + \gamma  \bar{A}) \mathbf{x} \big)\\
    = - d P_{\rho}  \mathbf{x} + u \hat{\mathbf{S}}\big( (\alpha \mathcal{I}  + \gamma \bar{A})  P_{\rho} \mathbf{x} \big) = \mathbf{h}(P_{\rho} \mathbf{x}).
\end{multline*}
This holds for all $\rho \in \Gamma$, and the theorem follows.
\end{proof}

%\begin{remark}
%In addition to $\Gamma$-equivariance, the opinion dynamics \eqref{} REMARK ABOUT Z2 SYMMETRY
%\end{remark}

In the remainder of this section we illustrate how graph symmetry dictates patterns of opinions on the network.

\begin{definition} \label{def:orbit_partition}
Let $\Gamma$ be the automorphism group  of $\bar{G}$, and  $i\in V$  a vertex. An {\em orbit of $i$} is
$O_{i} = \{ k \in V \ | k = \rho i \  \text{for some } \rho \in \Gamma \}$. The orbits are equivalence classes that partition $V$ through the equivalence relation 
\begin{equation}
    i \sim k \ \text{if} \ k = \rho i \ \text{for some }\rho \in \Gamma. \label{eq:relation}
\end{equation}
\end{definition}

\begin{theorem} Consider opinion dynamics \eqref{eq:opinion_dynamics_hom} with $u_{i} := u \geq 0$ and  $b_{i} = 0$ for all $i \in V$. Let $\Gamma$ be the automorphism group of the undirected graph $\bar{G}$, and for any two vertices $i,k \in V$ define the equivalence relation $i \sim k$ as in \eqref{eq:relation}. 

\noindent \textbf{A.} Suppose $\gamma > 0$. For the agreement equilibria $\mathbf{x} = (x_{1}, \dots, x_{N_{a}})$ from Theorem \ref{thm:agree_disagree}.A, if $i \sim k$, then $x_{i} = x_{k}$. 

\noindent \textbf{B.} Suppose $\gamma < 0$ and $\lambda_{min}$ has multiplicity 1. For the disagreement equilibria $\mathbf{x} = (x_{1}, \dots, x_{N_{a}})$ from Theorem \ref{thm:agree_disagree}.B, if $i \sim k$, then $|x_{i}| = |x_{k}|$. \label{thm:symmetry}
\end{theorem}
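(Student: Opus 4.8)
The plan is to exploit the $\Gamma$-equivariance established in Theorem \ref{thm:equivariance}, combined with the fact (from Theorem \ref{thm:agree_disagree}) that the agreement/disagreement equilibria emerging at the bifurcation lie along the critical eigenspace $W(\lambda_{max})$, respectively $W(\lambda_{min})$. The key observation is that $\Gamma$-equivariance of $\mathbf{h}$ forces the critical eigenspace to be an invariant subspace under the $\Gamma$-action $\rho \mapsto P_\rho$: since $P_\rho \bar A = \bar A P_\rho$, if $\mathbf{v}$ is a $\lambda$-eigenvector then so is $P_\rho \mathbf{v}$, hence $P_\rho W(\lambda) = W(\lambda)$ for every $\rho \in \Gamma$ and every eigenvalue $\lambda$.

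For part \textbf{A}, I would argue as follows. For a strongly connected (here, connected undirected) graph, Lemma \ref{lem:lambdas}.A gives that $\lambda_{max}$ is simple with a positive eigenvector $\mathbf{v}$. Since $P_\rho \mathbf{v}$ is again a positive $\lambda_{max}$-eigenvector and the eigenspace is one-dimensional, $P_\rho \mathbf{v} = c_\rho \mathbf{v}$ for some scalar $c_\rho > 0$; comparing norms (permutation matrices are orthogonal) forces $c_\rho = 1$, so $P_\rho \mathbf{v} = \mathbf{v}$ for all $\rho \in \Gamma$. Reading this entrywise: $v_{\rho^{-1} i} = v_i$, i.e. $v_i = v_k$ whenever $i \sim k$. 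The agreement equilibrium $\mathbf{x}$ at the bifurcation is (to leading order, and in the limit $u \searrow u_a$) a scalar multiple of $\mathbf{v}$, hence inherits the same coordinate equalities: $x_i = x_k$ when $i \sim k$. To make this rigorous beyond leading order one invokes the Equivariant Branching Lemma / fixed-point subspace argument: the bifurcating branch can be taken inside $\mathrm{Fix}(\Gamma) = \{\mathbf{x} : P_\rho \mathbf{x} = \mathbf{x}\ \forall \rho\}$, which is precisely the set of vectors constant on orbits, and this subspace is flow-invariant by equivariance.

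For part \textbf{B}, $\lambda_{min}$ is now assumed simple, so the same dichotomy applies: $P_\rho \mathbf{w} = \pm \mathbf{w}$ for each $\rho$ (the sign can be $-1$ here because $\mathbf{w}$ need not be sign-definite), and in either case $|w_{\rho^{-1} i}| = |w_i|$, giving $|w_i| = |w_k|$ when $i \sim k$. Since the disagreement equilibrium lies along the one-dimensional $W(\lambda_{min})$, it is a scalar multiple of $\mathbf{w}$ near the bifurcation, so $|x_i| = |x_k|$ when $i \sim k$. The map $\rho \mapsto (\text{sign of } c_\rho)$ is a homomorphism $\Gamma \to \{\pm 1\}$, which is the mechanism behind possible sign flips — worth a remark but not needed for the magnitude statement.

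The main obstacle is making precise the passage from "the bifurcating eigenvector has these symmetries" to "the actual nonlinear equilibrium branch has these symmetries," rather than merely its linear approximation. The clean way is the fixed-point-subspace argument: for any subgroup (here all of $\Gamma$ in part A, and the kernel of the sign homomorphism together with the $-1$ action in part B), $\mathrm{Fix}$ of that group action is invariant under the equivariant flow $\mathbf{h}$, and the simple-eigenvalue bifurcation taking place along a line contained in that Fix subspace produces a branch entirely within it. One must check the critical line indeed sits in the relevant Fix subspace — immediate from $P_\rho \mathbf{v} = \mathbf{v}$ (part A) and from $|w_i|$ being orbit-constant (part B, where one passes to the subgroup fixing $\mathbf{w}$, or argues directly on magnitudes using that $x(u)/\|x(u)\| \to \pm \mathbf{w}/\|\mathbf{w}\|$). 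I would present part A in full via the Fix-subspace argument and treat part B as its mild variant, flagging the sign-homomorphism subtlety in passing.
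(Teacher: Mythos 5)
Your proposal is correct and follows essentially the same route as the paper: both rest on $\Gamma$-equivariance, the $\Gamma$-invariance of the one-dimensional critical eigenspace $\operatorname{ker} J(\mathbf{0})$, and the resulting dichotomy $P_{\rho}\mathbf{v} = \pm\mathbf{v}$, which forces $|v_i| = |v_k|$ on orbits. Your two refinements --- pinning the sign to $+1$ in part A via positivity of the Perron eigenvector (which is what actually yields $x_i = x_k$ rather than only $|x_i| = |x_k|$) and using the fixed-point-subspace argument to carry the symmetry from the linearization to the full nonlinear branch --- are details the paper's terse proof leaves implicit, not a different method.
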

\begin{proof}
%For any group element $\rho \in \Gamma$, if $\mathbf{x}$ is an equilibrium of \eqref{eq:opinion_dynamics_hom}, then $\rho \mathbf{x}$ is also an equilibrium.  \cite[Definition 1.6]{GolubitskySymmetryPerspective}. 
In A and B, the solutions $\mathbf{x}$  
%$\mathbf{x}^{a}$, $\mathbf{x}^{d}$ 
appear along the 1-dimensional subspace $\operatorname{ker} J(\mathbf{0})$. For any $\rho \in \Gamma$ and $\mathbf{v} \in \operatorname{ker} J(\mathbf{0})$, $P_{\rho} \mathbf{v} \in \operatorname{ker} J(\mathbf{0})$ \cite[Remark 1.25]{GolubitskySymmetryPerspective},
%. Further, for any $\rho \in \Gamma$ and $\mathbf{v} \in \operatorname{ker} J(\mathbf{0})$, 
and $P_{\rho} \mathbf{v} = \pm \mathbf{v}$ \cite[Lemma 3.8.2]{cvetkovic2009introduction}. The only way for this to be true is if for any $i,k \in V$ for which $i \sim k$, $|v_{i}| = |v_{k}|$. 
\end{proof}

All graphs shown in Figure \ref{fig:GraphClasses} have a nontrivial symmetry and a simple $\lambda_{min}$. The vertices contained in the same orbit under the action of its symmetry have equal magnitude opinion in both agreement and disagreement parameter regimes (e.g. all the vertices of a cycle, the outer vertices of the star and wheel). The randomly generated graph in Figure \ref{fig:MoreGraphs}C also has a simple $\lambda_{min}$ but only a trivial symmetry. Each node is its own orbit and the equilibrium opinion magnitude is different at each node. In contrast, both graphs in Figure \ref{fig:MoreGraphs}A,B have a nontrivial symmetry but the dimension of $W(\lambda_{min})$ is 2 and the conditions of Theorem \ref{thm:equivariance} are not met. The disagreement opinion magnitudes on these graphs do not reflect the orbit structure of the graph's symmetry group.

%The line graph has a $Z_{2}$ reflection symmetry about its center, and the cycle, star, and wheel graphs have $D_{N_{a}}$, $S_{N_{a} - 1}$, and $D_{N_{a} - 1}$ symmetries respectively, where $D_{n}$ is the dihedral group of reflection and rotation symmetries of an $n$-sided polygon. The symmetries of these graph interchange the outer vertices, either through permutations, or through planar rotations and reflections. 
%{\begin{corollary}[Two Partitions]
%When $\gamma < 0$, $\lambda_{min}$ has multiplicity 1, and $V$ admits exactly two orbits under the action of $\Gamma$, if $i \not \sim j$ then $\operatorname{sign}(x_{i}^{d}) = - \operatorname{sign} (x_{j}^{d})$ for all $i,j \in V$. \label{cor:2part}
%\end{corollary}}
%\begin{proof}
%From Theorem \ref{thm:agree_disagree} we know at least one pair of agents $i,j \in V$, $\operatorname{sign}{x}_{i} = -\operatorname{sign}{x}_{j}$. Suppose $i \sim j$ and this holds true.

%Then $x_{i} \neq x_{j}$ and therefore $i \not \sim j$. The corollary follows.
%\end{proof}
%%%%%%%%%%%%%%%%%%%%%%%%%%%%%%%%%%%%%%%%%%%%%%%%%%%%%%%%%%%%%%%%%%%%%%%%%%%%%%%%%%%%%%%%%%%%%%%%%%%%%%%%%%%%%%%%%%%%%

{ \color{red}

}

\section{Agent Sensitivity and Opinion Cascades \label{sec:sensitivity}}

%Finally we briefly illustrate with an example how the spectral properties of the graph adjacency matrix studied in this paper ultimately inform sensitivity of the group's opinion formation to external inputs $b_{i}$ when the attention parameter of each agent is updated in time according to a state feedback law.  Full formalism studying this property of the opinion formation process is under development as part of ongoing work. 

In  \cite[Section VI]{Bizyaeva2020} a state feedback law is introduced for the attention parameter $u_{i}$ in \eqref{eq:opinion_dynamics_hom}, which enables the group to become opinionated in response to  input $b_{i}$. Each agent's attention parameter $u_{i}$  tracks a saturated norm of the system state observed by agent $i$, with dynamics defined by
\begin{equation}
    \tau_s \dot{u}_i = -u_i + S_u  \left( x_{i}^{2} +\sum_{k = 1}^{N_a} ( \bar{a}_{ik} x_k)^2 \right ). \label{eq:udot}
\end{equation}
Here, $\tau_{s}>0$ is the time scale of the integration and
%the saturation function satisfies $S_{u}(0) = 0$, $S_{u}'(0) > 0$, and $S_{u}(y) > 0$ when $y > 0$. We define $S_{u}$ as
$S_u(y) = u_f (F(g(y - y_m)) - F(-gy_m))$,
with $F(x) = \frac{1}{1 + e^{-x}}$.  Design parameters $g,u_{f},y_{m} > 0$  tune the system response. The positive feedback between opinion strength and attention defined by dynamics~\eqref{eq:udot} provides the resulting opinion-attention dynamics with a threshold for the input strength needed to let an originally weakly opinionated ($|x_i|\ll\vartheta$, for all $i=1,\ldots,\Na$), weakly attentive ($|u_i|\ll\vartheta$, for all $i=1,\ldots,\Na$) network transition to a strongly opinionated ($|x_i|\gg\vartheta$, for all $i=1,\ldots,\Na$), strongly attentive ($|u_i|\gg\vartheta$, for all $i=1,\ldots,\Na$) state~\cite{Franci2021}. 

Such an \textit{opinion cascade} is illustrated in Figures~\ref{fig:BalancedTree} and~\ref{fig:RandomGraph} top left. An opinion cascade is of agreement (disagreement) type if the final opinion configuration is of agreement (disagreement) type.
%The saturation function $S_{u}$ implicitly defines a threshold. When agent $i$ observes a system state above this threshold, it attention parameter $u_{i}(t)$ approaches the upper saturation bound of $S_{u}$ over time. Let $u_{sat} = \lim_{y \to \infty} S_{u}(y)$. 
%We define an \textit{opinion cascade} on the network with coupled opinion dynamics \eqref{eq:opinion_dynamics_hom} and attention dynamics \eqref{eq:udot} as a transition from an initial state $\| \mathbf{x}(0) \| < \delta_{1}$, $\| \mathbf{u}(0) \| < \delta_{2}$ for some $\delta_{1}, \delta_{2} > 0$ to $\|\mathbf{x}(t) \| > u_{sat}/d - \varepsilon_{1}$, $\| \mathbf{u}(t) \| > u_{sat} - \varepsilon_{2}$ for some $\varepsilon_{1}, \varepsilon_{2} > 0$ and for all $t > T>0$.
A rigorous analysis of opinion cascade over networks is the subject of ongoing work based on unfolding theory~\cite{Golubitsky1985} of agreement and disagreement bifurcations. Here, we illustrate with numerical examples. We show in particular how the spectral properties of the adjacency matrix predict which agents are more likely to trigger a cascade when excited by an %external 
input.

We conjecture that the norm  of the projection of the vector of inputs $\mathbf{b}$ onto the eigenspaces $W(\lambda_{max})$ or $W(\lambda_{min})$, depending on the cooperative or competitive regime, is directly related to whether or not the opinion cascade gets triggered. To illustrate this, in Figures \ref{fig:BalancedTree} and \ref{fig:RandomGraph} we show two examples of a disagreement cascade for graphs with a simple $\lambda_{min}$. In Figure \ref{fig:BalancedTree} the graph is a balanced tree, which is bipartite. Thus, the magnitude of each entry of its $\lambda_{min}$-eigenvector corresponds to the corresponding agent's relative centrality, as proved in Lemma \ref{lem:diagr_centrality}. When an input is given to the most central agent an opinion cascade gets triggered, whereas when the same input is given to an agent located on an outer leaf, the group remains neutral. 

Analogously, in Figure \ref{fig:RandomGraph} a cascade is triggered on a randomly generated network when an input is given to the agent with the largest magnitude entry in the $\lambda_{min}$-eigenvector, and is not triggered when the same input is given to the agent with the smallest entry. The post-cascade pattern of opinions on this network is very close to the pattern for the solution of the zero-input system shown in Figure \ref{fig:MoreGraphs}C. In this way, the spectral properties of $\bar{A}$ can be used not only to predict the patterns of opinion formation for opinion dynamics \eqref{eq:opinion_dynamics_hom} both with a static $u_{i} = u$ and coupled with \eqref{eq:udot}, but also to inform which agent should receive a control signal or carry a sensor in order to have maximal likelihood of an informed opinion cascade spreading on the network.

\begin{figure}[h]
    \centering
    \includegraphics[width = 0.47
    \textwidth]{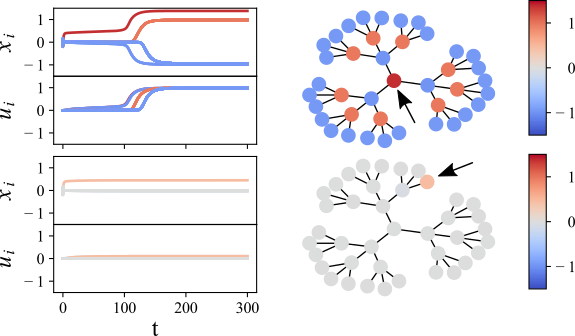}
    \caption{Triggering a cascade on the balanced tree. Input $b_{i} = 0.4$ for agent marked with arrow; $b_{i} = 0$ for all other agents. Simulations of \eqref{eq:opinion_dynamics_hom} start from  small random initial  opinions  drawn from $\mathcal{N}(0,0.1)$.
    %a random distribution with variance 0.1.  
    Left: time trajectories of the dynamics.  Right: final opinion, indicated by color, of simulation  at $t=300$.   Parameters: $d = 1$, $\alpha = 1$, $\gamma = -1$, $u_{i}(0) = 0$ for all $i \in V$, $g = 10$, $y_{m} = 0.4$, $u_{f} = 1$, $\tau_{s} = 10$. Only in the top plots does the model undergo a disagreement opinion cascade, with $\vartheta=0.5$.
    %With the given parameter choices, $u_{sat} > u_{d}. $ 
    }
    \label{fig:BalancedTree}
\end{figure}

\begin{figure}[h]
    \centering
    \includegraphics[width = 0.47
    \textwidth]{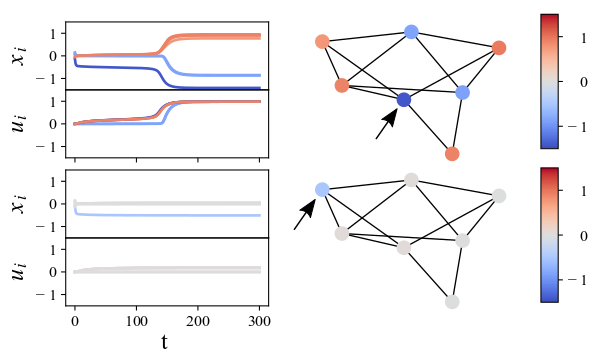}
    \caption{Triggering a cascade on a randomly generated graph. Input $b_{i} = -0.45$ for agent marked with arrow; $b_{i} = 0$ for all other agents.  Simulations of \eqref{eq:opinion_dynamics_hom} start from  small random initial  opinions  drawn from $\mathcal{N}(0,0.1)$.  Left: time trajectories of the dynamics. Right: final opinion, indicated by color, of simulation at $t=300$.  Parameters: $d = 1$, $\alpha = 0.5$, $\gamma = -0.5$, $u_{i}(0) = 0$ for all $i \in V$, $g = 10$, $y_{m} = 0.4$, $u_{f} = 1$, $\tau_{s} = 10$. Only in the top plots does the model undergo a disagreement opinion cascade, with $\vartheta=0.5$. %With the given parameter choices, $u_{sat} > u_{d}. $  
    }
    \label{fig:RandomGraph} \vspace*{-5mm}
\end{figure}
%%%%%%%%%%%%%%%%%%%%%%%%%%%%%%%%%%%%%%%%%%%%%%%%%%%%%%%%%%%%%%%%%%%%%%%%%%%%%%%%%%%%%%%%%%%%%%%%%%%%%%%%%%%%%%%%%%%%%

\section{Discussion \label{sec:dis}}
We have shown how patterns of nonlinear opinion formation on two-option networks can be studied systematically using spectral properties of the graph adjacency matrix. We proved that agent centrality and symmetry of the underlying graph select the pattern of opinions in agreement and disagreement  for a large set of networks. We illustrated how the eigenvectors of the adjacency matrix inform sensitivity of individual nodes to input that triggers opinion cascades; proving the influence of distributed input on opinion cascades is the subject of ongoing work. Other future work includes the natural generalization to multi-option networks with opinion dynamics defined in \cite{Bizyaeva2020}.
%, of which the system studied in this paper is a special case. The multi-option extension is the subject of ongoing work, along with the development of a full mathematical formalism for opinion cascades such as those discussed in Section \ref{sec:sensitivity}, for two-option networks and more generally for multi-option networks.

\bibliographystyle{./bibliography/IEEEtran}
\bibliography{./bibliography/references}

\end{document}